\documentclass{amsart}
\usepackage[margin=1in]{geometry}
\usepackage{graphicx} 
\usepackage{amsthm,xcolor}
\usepackage[colorlinks=true,citecolor=black,linkcolor=black,urlcolor=blue]{hyperref}

\usepackage{blkarray}
\usepackage{url}
\usepackage{float}
\usepackage{cleveref}
\usepackage{tikz}
\usepackage{tikz-cd}
\usepackage{mathrsfs}

\usepackage[en-US]{datetime2}
\DTMlangsetup{showdayofmonth=false}

\definecolor{c1}{HTML}{0085FF}
\definecolor{c2}{HTML}{FF9000}
\definecolor{c3}{HTML}{90D300}
\definecolor{c4}{HTML}{9728A1}

\def\PP{\mathcal{P}}
\def\RR{\mathcal{R}}
\def\bfd{{\mathbf{d}}}
\def\bfr{{\mathbf{r}}}

\usepackage{amsmath}
\usepackage{amssymb}
\usepackage{mathtools}
\newtheorem{theorem}{Theorem}[section]

\newtheorem{proposition}[theorem]{Proposition}
\newtheorem{corollary}[theorem]{Corollary}
\newtheorem{lemma}[theorem]{Lemma}
\theoremstyle{definition}
\newtheorem{definition}[theorem]{Definition}
\newtheorem{example}[theorem]{Example}
\newtheorem{remark}[theorem]{Remark}

\everymath{\displaystyle}

\definecolor{SJUBlue}{RGB}{0, 160, 230}
\definecolor{bluegreen}{RGB}{144,144,255}
\definecolor{darkgreen}{RGB}{31, 145, 61}

\newcommand{\crit}{\mathrm{\crit}}

\newcommand{\precdot}{\prec\mathrel{\mkern-5mu}\mathrel{\cdot}}
\newcommand{\leqt}{\preceq_{\text{T}}}
\newcommand{\geqt}{\succeq_{\text{T}}}
\newcommand{\leqd}{\preceq_{\text{D}}}
\newcommand{\lessdott}{\precdot_{\text{T}}}
\newcommand{\lessdotd}{\precdot_{\text{D}}}

\newcommand{\delete}[1]{}

\title{Varieties of Chain Complexes and Mixed Dimer Covers}
\author{}
\date{\today}

\author[Anderson]{Portia X. Anderson}
\address[P.~X. Anderson]{Department of Mathematics, Cornell University, Ithaca, NY 14853}
\email{\textcolor{blue}{\href{mailto:pxa2@cornell.edu}{pxa2@cornell.edu}}}

\author[Banaian]{Esther Banaian}
\address[E.~Banaian]{Institute for Mathematics, Paderborn University, Paderborn, Germany}
\email{\textcolor{blue}{\href{mailto:Esther.Banaian@math.uni-paderborn.de}{Esther.Banaian@math.uni-paderborn.de}}}

\author[Ferreri]{Melanie J. Ferreri}
\address[M. ~J. Ferreri]{Department of Mathematics, College of William \& Mary, Williamsburg, VA, 23187}
\email{\textcolor{blue}{\href{mailto:mjferreri@wm.edu}{mjferreri@wm.edu}}}

\author[Mayers]{Nicholas Mayers}
\address[N.~Mayers]{Department of Mathematics, Kennesaw State University, Marietta, GA, 30060}
\email{\textcolor{blue}{\href{mailto:nmayers@ncsu.edu}{nmayers@ncsu.edu}}}

\author[Wang]{Shiyun Wang}
\address[S.~Wang]{Department of Mathematics, University of Minnesota Twin Cities, MN, 55455}
\email{\textcolor{blue}{\href{mailto:wang8406@umn.edu}{wang8406@umn.edu}}}

\author[Wilson]{Alexander N.~Wilson}
\address[A.~N.~Wilson]{Department of Mathematics, York University, Toronto, ON, Canada}
\email{\textcolor{blue}{\href{mailto:math@alexandernwilson.com}{math@alexandernwilson.com}}}

\begin{document}

\maketitle
\begin{abstract}
    A quiver representation consists of a collection of vector spaces along with a set of arrows, which are linear maps between these spaces. In this work, we study quiver representations in equioriented type $A$ which are also chain complexes; that is, in which consecutive arrows compose to zero. We show that orbits of these representations under a change of basis action are in bijection with mixed dimer covers of a $2 \times n$ grid graph. The latter object can be endowed with a partial order which is a distributive lattice, and we show that the degeneration order on the orbits of chain complexes is a coarsening of this partial order. In addition, we use recent matrix formulae of Claussen and Ovenhouse to enumerate these orbits. This also computes the Kostant partition function  applied to height-restricted, type $A$ roots. When the dimension vector is uniform, we discuss a correspondence with paths of a beam of light bouncing between glass plates and give an explicit generating function.
\end{abstract}

\section{Introduction}
Gabriel's celebrated theorem implies that orbits of quiver representations of types $A$, $D$, and $E$ are in bijection with partitions of their dimension vector into roots of the associated Lie algebra. Such partitions are enumerated by the \emph{Kostant partition function}.  Classically, for a semisimple Lie algebra, the values of the Kostant partition function are used in computing the multiplicity of a weight of an irreducible representation. It is notoriously difficult to find closed formulas for these values. Nevertheless,  this enumeration problem is well studied and has been connected to several other combinatorial objects, including 
flow polytopes \cite{baldonivergne2008}, juggling sequences \cite{benedetti2020kostant}, 
and Tesler matrices \cite{dmtcs:2475}. 

The orbits of representations of a quiver with a fixed dimension vector admit a partial order by containment of closures known as the \emph{degeneration order}. 
The degeneration order is a classical object of study, see for example 
\cite{abeasis1985degenerationsD, abeasis1985degenerations, Riedtmann}. 
However, there is little known about the combinatorial structure of this partial order. 

This work began with a hope to shed more light on these classical problems in type $A$. In the process, we discovered much nicer behavior when we restricted our set of positive roots to those of height at most two. This corresponds to restricting to varieties of \emph{complexes}, that is, orbits of representations whose maps all compose to 0. Equivalently, this corresponds to studying modules of the quotient of the path algebra of a linearly oriented type $A$ quiver modulo the square of the Jacobson radical. This algebra is an instance of a gentle algebra and appears prominently in \cite{Schemes}.  

Our first observation was a straightforward bijection between orbits of complexes with a fixed dimension vector (equivalently, Kostant partitions using roots of heights one and two) and \emph{mixed-dimer covers} of a grid graph. There has been a surge of work lately on (mixed-)dimer covers of graphs, see for example \cite{ClaussenOvenhouse,zbMATH08171225,zbMATH08171225,zbMATH07567672}. In particular, in \cite{ClaussenOvenhouse}, Claussen and Ovenhouse provide an enumeration formula via matrix products for mixed dimers on a family of graphs which includes the grid graph. Therefore, this bijection provides an enumeration of the type $A$, height-restricted Kostant partitions of interest here as a corollary; see Corollary \ref{cor:Enumeration}. In the special case of a dimension vector $\mathbf{d} = (1,2,\ldots, n)$, we see that the number of Kostant partitions with roots of height at most two is an Euler number. These results also have a natural $q$-analogue, provided in Proposition \ref{prop:qAnalog}.

Mixed dimer covers of the grid graph also carry a partial order, called here the ``twist order''. The story of this partial order began with Propp's investigation of single dimers \cite{propp2002lattice}. It was further studied for $d$-dimer covers in \cite{zbMATH08171225} and certain mixed dimer covers in \cite{claussen2020expansion}. In each case, the partial order is remarkably a distributive lattice, and understanding the underlying poset of join-irreducibles is helpful for calculations concerning surface-type cluster algebras \cite{musiker2013bases}. In Theorem \ref{thm:dlattice}, we generalize the work in \cite{claussen2020expansion} by explicitly describing the poset of join-irreducibles for any lattice of mixed-dimer covers of a $2 \times n$ grid graph. 

Now, the first bijection between orbits of complexes and mixed dimer covers noted above dramatically fails to respect either partial order. However, in Theorem \ref{thm:Phiorderpreserving}, we provide a second bijection which is order-preserving in one direction. In particular, this bijection shows that the twist order is a refinement of the degeneration order. We remark that we retain the previous, non-order-preserving bijection in this paper as it is simpler and shows more clearly that the enumerations of these sets are closely related.  

This entire narrative depends on a dimension vector $\mathbf{d} = (d_1,\ldots,d_n)$. In Section \ref{sec:uniformvector}, we restrict to the case where all $d_i$ are equal, i.e., the $d$-dimer case. Inspired by a few OEIS entries, we show a correspondence with paths of a beam of light bouncing between glass plates and provide an explicit generating function recording the number of $d$-dimers.  We conclude with directions for future work in \Cref{sec:futurework}.

\section{Preliminaries}\label{sec:prelim}

In this section, we introduce our objects of interest along with relevant background. As each collection comes equipped with a partial order with which we will be concerned, we begin by covering the requisite background from the theory of posets.

\subsection{Posets}

A \emph{poset} $(\mathcal{P},\preceq)$ consists of a set $\mathcal{P}$ along with a binary relation $\preceq$ between the elements of $\mathcal{P}$ that is
reflexive, anti-symmetric, and transitive. 
For $x,y\in\mathcal{P}$, if $x\preceq y$ and $x\neq y$, then we write $x\prec y$. 
In the case that $x\prec y$ and there exists no $z\in \mathcal{P}$ satisfying $x\prec z\prec y$, then $x\prec y$ is a \emph{covering relation}, denoted $x\precdot y$, and we say that $y$ \emph{covers} $x$. The \emph{Hasse diagram} $\mathcal{H}(\mathcal{P})$ of $\mathcal{P}$ consists of the graph whose vertices are the elements of $\mathcal{P}$ where if $p\prec q$ in $\mathcal{P}$ then the vertex $q$ is higher up than $p$, and vertices $p$ and $q$ are connected by an edge whenever $p\precdot q$. For further details concerning posets, we recommend \cite{EC1}.

Given posets $(\mathcal{P},\preceq_{\mathcal{P}})$ and $(\mathcal{Q},\preceq_{\mathcal{Q}})$, a map $f:\mathcal{P}\to\mathcal{Q}$ is called \emph{order-preserving} if $p\preceq_\mathcal{P} q$ implies that $f(p)\preceq_\mathcal{Q} f(q)$. In the case that an order-preserving map $f:\mathcal{P}\to\mathcal{Q}$ has an order-preserving inverse, we say that $f$ is a (poset) \emph{isomorphism} and that $(\mathcal{P},\preceq_{\mathcal{P}})$ and $(\mathcal{Q},\preceq_{\mathcal{Q}})$ are \emph{isomorphic}, denoted $(\mathcal{P},\preceq_{\mathcal{P}})\cong(\mathcal{Q},\preceq_{\mathcal{Q}})$. 

For a poset $(\mathcal{P},\preceq)$, an \emph{order ideal} is a subset $I\subseteq\mathcal{P}$ with the property that if $y\in I$ and $x\preceq y$, then $x\in I$. The empty set is an order ideal of any poset. The collection of order ideals of $(\mathcal{P},\preceq)$ is denoted $\mathcal{J}(\mathcal{P})$. We say that $(\mathcal{P},\preceq)$ is a \emph{lattice} if given any pair of elements $x,y\in\mathcal{P}$, there exists a least upper bound, called a \emph{join} and denoted $x\vee y$, and a greatest lower bound, called a \emph{meet} and denoted $x\wedge y$. In the case that $(\mathcal{P},\preceq)$ is a lattice for which $x\vee(y\wedge z)=(x\vee y)\wedge (x\vee z)$ holds for all $x,y,z\in\mathcal{P}$, then we call $(\mathcal{P},\preceq)$ a \emph{distributive lattice}. Order ideals and distributive lattices are related by the following well-known result which will be used later.

\begin{theorem}[Fundamental Theorem of Finite Distributive Lattices]\label{thm:FTFDL}
    A poset $(L,\preceq_L)$ is a finite distributive lattice if and only if there exists a finite poset $(\mathcal{P},\preceq_{\mathcal{P}})$ for which $(L,\preceq_L)\cong(\mathcal{J}(\mathcal{P}),\subseteq)$.
\end{theorem}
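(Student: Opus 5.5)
The plan is to prove the two directions separately; the reverse direction is routine and the forward direction carries the content. Throughout, the lattice operations on $\mathcal{J}(\mathcal{P})$ are union and intersection.

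For the ``if'' direction, let $\mathcal{P}$ be a finite poset.
\begin{itemize}
\item A union or intersection of order ideals is again an order ideal.
\item Under inclusion, $I\cup J$ is the least upper bound of $I,J$ and $I\cap J$ is the greatest lower bound.
\item Distributivity of $\cup$ over $\cap$ is a set identity, so $(\mathcal{J}(\mathcal{P}),\subseteq)$ is a distributive lattice.
\item It is finite because it is a subset of the power set of $\mathcal{P}$.
\item A poset isomorphism preserves joins and meets, since these are defined purely in terms of the order. Hence any $L\cong\mathcal{J}(\mathcal{P})$ is also a finite distributive lattice.
\end{itemize}

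For the ``only if'' direction, let $L$ be a finite distributive lattice. Take $\mathcal{P}$ to be the set of join-irreducible elements of $L$, with the order induced from $L$. An element $j\in L$ is join-irreducible if it is not the minimum $\hat 0$ and $j=x\vee y$ forces $j=x$ or $j=y$. Define
\[
\varphi(x)=\{\,j\in\mathcal{P} : j\preceq_L x\,\}.
\]
Each $\varphi(x)$ is clearly an order ideal of $\mathcal{P}$, and $\varphi$ is order-preserving. The proof then proceeds in four steps.

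\begin{enumerate}
\item Every $x\in L$ equals $\bigvee\varphi(x)$, with the empty join equal to $\hat 0$. Argue by induction on the number of elements below $x$: either $x$ is $\hat 0$, or $x$ is join-irreducible, or $x=y\vee z$ with $y,z\prec x$, and then induction applies to $y$ and $z$. Finiteness is used here.
\item Step 1 gives that $\varphi$ is injective and that $\varphi(x)\subseteq\varphi(y)$ implies $x\preceq y$. So $\varphi$ is an order-embedding with order-preserving inverse on its image.
\item Surjectivity needs a lemma: in a distributive lattice, a join-irreducible $j$ with $j\preceq x\vee y$ satisfies $j\preceq x$ or $j\preceq y$. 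To prove it, distributivity gives $j=j\wedge(x\vee y)=(j\wedge x)\vee(j\wedge y)$, and join-irreducibility forces $j=j\wedge x$ or $j=j\wedge y$. By induction the lemma extends to finite joins.
\item Given an order ideal $I$, set $x=\bigvee I$. Clearly $I\subseteq\varphi(x)$. Conversely, if $j\in\varphi(x)$, then $j\preceq\bigvee I$, so by the lemma $j\preceq i$ for some $i\in I$. Since $I$ is an order ideal, $j\in I$. Hence $\varphi(x)=I$.
\end{enumerate}

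The main obstacle is the surjectivity in Steps 3 and 4. This is the only place distributivity enters: without it the lemma of Step 3 fails, and an arbitrary finite lattice need not be isomorphic to the order ideals of its join-irreducibles. The other steps use only finiteness and the lattice axioms.
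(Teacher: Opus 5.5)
The paper gives no proof of this result. It is quoted as the well-known Fundamental Theorem of Finite Distributive Lattices (Birkhoff's representation theorem; see \cite{EC1}). Only its easy ``if'' direction is actually used, to conclude from $\mathcal{D}_{\mathbf{d}}\cong\mathcal{J}(P_{\mathbf{d}})$ that $(\mathcal{D}_{\mathbf{d}},\preceq_T)$ is a distributive lattice.

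Your argument via join-irreducibles is the standard proof, and it is correct. Two small points should be made explicit.

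First, the paper defines distributivity by $x\vee(y\wedge z)=(x\vee y)\wedge(x\vee z)$. Your Step 3 instead uses the dual law $j\wedge(x\vee y)=(j\wedge x)\vee(j\wedge y)$. The two are equivalent in any lattice: using the paper's law twice together with absorption, $(a\wedge b)\vee(a\wedge c)=a\wedge\bigl(c\vee(a\wedge b)\bigr)=a\wedge(c\vee a)\wedge(c\vee b)=a\wedge(b\vee c)$. You should state or cite this equivalence.

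Second, your use of $\hat 0$ presumes $L\neq\emptyset$. This is the standard convention, and the statement itself needs it: $\mathcal{J}(\mathcal{P})$ always contains $\emptyset$, whereas the paper's literal definition of a lattice is vacuously satisfied by the empty poset.
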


\subsection{Vector Partitions}
\label{subsec:partitions}

 Let $\mathbf{e}_i\in \mathbb{Z}_{\ge 0}^n$ be the vector consisting of all zeros except a one in position $i$. In this paper, central objects of study are vector partitions of some vector $\mathbf{d}=(d_1,\ldots,d_n)\in\mathbb{Z}_{\ge 0}^n$ into vectors $\mathbf{e}_i$ for $1\leq i\leq n$ and $\mathbf{e}_i+\mathbf{e}_{i+1}$ for $1\leq i\leq n-1$.
Denote the set of such vector partitions by $\PP_\bfd$.

\begin{example}\label{ex:PP}
    For $\mathbf{d}=(2,3,1)$, the collection $\mathcal{P}_\mathbf{d}$ consists of the vector partitions
    $$\{\mathbf{e}_1+\mathbf{e}_2,\mathbf{e}_1+\mathbf{e}_2,\mathbf{e}_2+\mathbf{e}_3\},\quad
    \{\mathbf{e}_1,\mathbf{e}_2,\mathbf{e}_1+\mathbf{e}_2,\mathbf{e}_2+\mathbf{e}_3\},\quad
    \{\mathbf{e}_2,\mathbf{e}_3,\mathbf{e}_1+\mathbf{e}_2,\mathbf{e}_1+\mathbf{e}_2\},$$
    $$\{\mathbf{e}_1,\mathbf{e}_1,\mathbf{e}_2,\mathbf{e}_2,\mathbf{e}_2+\mathbf{e}_3\},\quad
    \{\mathbf{e}_1,\mathbf{e}_2,\mathbf{e}_2,\mathbf{e}_3,\mathbf{e}_1+\mathbf{e}_2\},\quad\text{and}\quad
    \{\mathbf{e}_1,\mathbf{e}_1,\mathbf{e}_2,\mathbf{e}_2,\mathbf{e}_2,\mathbf{e}_3\}.$$
\end{example}

An alternative way to view an element $p\in\PP_\bfd$ is as a tuple $\bfr=(r_1,r_2,\ldots,r_{n-1})\in\mathbb{Z}^{n-1}_{\ge 0}$ where $r_i$ is the number of times that the vector $\mathbf{e}_i+\mathbf{e}_{i+1}$ occurs in $p$ for $1\le i\le n-1$. A tuple in $\mathbb{Z}_{\ge 0}^{n-1}$ corresponds to an element of $\PP_\bfd$ exactly when the following conditions are satisfied: \begin{equation}\label{eq:rankconditions}
r_i\leq \min(d_i,d_{i+1}) \quad\text{for all}\quad 1\leq i\leq n-1\qquad\text{and}\qquad r_i+r_{i+1}\leq d_{i+1}\quad\text{for all}\quad 1\leq i\leq n-2.    
\end{equation}
We denote the set of tuples satisfying these conditions by $\RR_\bfd$ and call them \emph{rank tuples}.
This terminology is motivated by the fact that these tuples encode the ranks of the maps
in the orbit of type $A$ quiver representations indexed by the vector partition $p\in\PP_\bfd$ (see Section \ref{sec:complexes}). 

\begin{example}\label{ex:RR}
    For $\mathbf{d}=(2,3,1)$, the collection $\mathcal{R}_\mathbf{d}$ consists of the tuples $$(2,1),\quad (1,1),\quad (2,0),\quad (0,1),\quad (1,0),\quad\text{and}\quad(0,0).$$
\end{example}

Note that we have a natural partial order $\leqd$ on $\RR_\bfd$ defined as follows.

\begin{definition}\label{def:vporder}
Define the relation $\leqd$ on $\RR_\bfd$ for $\mathbf{d}\in\mathbb{Z}^{n-1}_{\ge 0}$ by \[(a_1,a_2,\ldots,a_{n-1})\leqd(b_1,b_2,\ldots,b_{n-1})\]
if and only if $a_i\leq b_i$ for all $1\leq i\leq n-1$. 
\end{definition}

This order coincides with the degeneration order on associated orbits of quiver representations (see Section~\ref{sec:complexes} for further details). The poset $(\mathcal{R}_{\mathbf{d}}, \preceq_D)$ for $\mathbf{d}=(2,3,1)$ is illustrated in Figure~\ref{fig:R231}.

\begin{remark}
When studying the degeneration order of type $A$ quiver representations, as we discuss further in Section \ref{sec:complexes}, it is common to discuss a triangular \emph{rank array}; see for example \cite{abeasis1985degenerations}. Our restricted setting corresponds to rank arrays in which only one diagonal of the triangle is nonzero, hence we only require the rank tuples described here.
\end{remark}

\begin{figure}
    \centering
    \[\begin{tikzpicture}
\node(00) at (0,0){$(0,0)$};
\node(10) at (-1,1){$(1,0)$};
\node(01) at (1,1){$(0,1)$};
\node(20) at (-1,2){$(2,0)$};
\node(11) at (1,2){$(1,1)$};
\node(21) at (0,3){$(2,1)$};
\draw(00) -- (10);
\draw(00) -- (01);
\draw(10) -- (20);
\draw(10) -- (11);
\draw(01) -- (11);
\draw(20) -- (21);
\draw(11) -- (21);
\end{tikzpicture}\]
    \caption{$(\mathcal{R}_{\mathbf{d}}, \preceq_D)$ for $\mathbf{d}=(2,3,1)$}
    \label{fig:R231}
\end{figure}

\subsection{Mixed Dimer Covers}
\label{subsec:dimer}

Let $G=(V,E)$ be a finite graph and $\mathbf{d}:V\to\mathbb{Z}_{\ge 0}$  a function assigning a non-negative integer to each vertex of $G$. Moreover, for each $v\in V$, let $\mathrm{E}(v)\subseteq E$ denote the collection of edges adjacent to $v$ in $G$. A \textit{$\mathbf{d}$-dimer cover} of $G$ is a map $f:E\to\mathbb{Z}_{\ge 0}$ such that $$\sum_{e\in \mathrm{E}(v)}f(e)=\mathbf{d}(v)\qquad \text{for all}\quad v\in V.$$ Note that we can equivalently think of a $\mathbf{d}$-dimer cover of $G$ as a multiset of edges where the function $f$ corresponds to the multiplicity of an edge. More generally, $\mathbf{d}$-dimer covers are referred to as \textit{mixed dimer covers}.

In this article, we will be primarily concerned with mixed dimer covers of the $2\times n$ grid. That is, mixed dimer covers of the graph $G=(V,E)$ with $V=\{(i,0),(i,1)~:~1\le i\le n\}$ and $E$ consisting of $\{(i,0),(i,1)\}$ for $1\le i\le n$ as well as $\{(i,0),(i+1,0)\}$ and $\{(i,1),(i+1,1)\}$ for $1\le i\le n-1$. See Figure~\ref{fig:2bymgrid} for an illustration of the $2\times 6$ grid. To illustrate mixed dimer covers of a graph $G$, we simply write the multiplicity of an edge in the cover beside the given edge (see Figure~\ref{fig:mixddim}).

\begin{figure}[h]
    \centering
    $$\begin{tikzpicture}
\draw (0,0) -- (1,0) -- (2,0) -- (3,0) -- (4,0) -- (5,0) -- (5,1) -- (4,1) -- (3,1) -- (2,1) -- (1,1) -- (0,1) -- (0,0);
\draw (1,0) -- (1,1);
\draw (2,0) -- (2,1);
\draw (3,0) -- (3,1);
\draw (4,0) -- (4,1);
\end{tikzpicture}$$
    \caption{The $2\times 6$ grid.}
    \label{fig:2bymgrid}
\end{figure}

For $\mathbf{d}=(d_1,\hdots,d_n)\in\mathbb{Z}_{\ge 0}^n$, we denote by $\mathcal{D}_\mathbf{d}$ the set of mixed dimer covers of the $2\times n$ grid with the vertices $(i,0)$ and $(i,1)$ assigned the value of $d_i$ for $1\le i\le n$. Such mixed dimer covers were considered in \cite{ClaussenOvenhouse}.

\begin{example}\label{ex:DimerCovers231}
    In Figure~\ref{fig:mixddim}, we illustrate all elements of $\mathcal{D}_\mathbf{d}$ for $\mathbf{d}=(2,3,1)$. Note that such mixed dimer covers are equal in number to the vector partitions in $\mathcal{P}_\mathbf{d}$ illustrated in Example \ref{ex:PP}.
    \begin{figure}[h]
        \centering
        $$\begin{tikzpicture}
\draw(0,0) to node[below]{2} (1,0) to  node[below]{1} (2,0) to  node[right]{0} (2,1) to  node[above]{1} (1,1) to node[above]{2}(0,1) to node[left]{0}(0,0);
\draw (1,0) to node[right]{0} (1,1);
\end{tikzpicture}\quad \begin{tikzpicture}
\draw(0,0) to node[below]{1} (1,0) to  node[below]{1} (2,0) to  node[right]{0} (2,1) to  node[above]{1} (1,1) to node[above]{1}(0,1) to node[left]{1}(0,0);
\draw (1,0) to node[right]{1} (1,1);
\end{tikzpicture}\quad \begin{tikzpicture}
\draw(0,0) to node[below]{2} (1,0) to  node[below]{0} (2,0) to  node[right]{1} (2,1) to  node[above]{0} (1,1) to node[above]{2}(0,1) to node[left]{0}(0,0);
\draw (1,0) to node[right]{1} (1,1);
\end{tikzpicture}$$ $$\begin{tikzpicture}
\draw(0,0) to node[below]{0} (1,0) to  node[below]{1} (2,0) to  node[right]{0} (2,1) to  node[above]{1} (1,1) to node[above]{0}(0,1) to node[left]{2}(0,0);
\draw (1,0) to node[right]{2} (1,1);
\end{tikzpicture}\quad \begin{tikzpicture}
\draw(0,0) to node[below]{1} (1,0) to  node[below]{0} (2,0) to  node[right]{1} (2,1) to  node[above]{0} (1,1) to node[above]{1}(0,1) to node[left]{1}(0,0);
\draw (1,0) to node[right]{2} (1,1);
\end{tikzpicture}\quad \begin{tikzpicture}
\draw(0,0) to node[below]{0} (1,0) to  node[below]{0} (2,0) to  node[right]{1} (2,1) to  node[above]{0} (1,1) to node[above]{0}(0,1) to node[left]{2}(0,0);
\draw (1,0) to node[right]{3} (1,1);
\end{tikzpicture}$$
        \caption{Mixed dimer covers of $\mathcal{D}_\mathbf{d}$ for $\mathbf{d}=(2,3,1)$. The top left dimer cover has multiplicities $h_1 = 2, h_2 = 1, v_1 = 0, v_2 = 0, v_3 = 0$. 
        }
        \label{fig:mixddim}
    \end{figure}
\end{example}

A convenient simplification will be to consider the multiplicity of only certain edges in a mixed dimer cover of a $2 \times n$ grid.

\begin{definition}\label{def:hAndvFunctions}
Given a mixed dimer cover $D\in\mathcal{D}_\mathbf{d}$ for $\mathbf{d}\in\mathbb{Z}_{\ge 0}^n$, let $h_i(D)$ be the multiplicity of $\{(i,0),(i+1,0)\}$ in $D$ for $1\leq i\leq n-1$ and let $v_i(D)$ be the multiplicity of $\{(i,0),(i,1)\}$ in $D$ for $1\leq i\leq n$.
\end{definition}

It is fairly straightforward to see that $h_i(D)$ is also the multiplicity of  $\{(i,1),(i+1,1)\}$ in a mixed dimer cover $D$.

\[\begin{tikzpicture}[scale=1]
  \draw (0,0) rectangle (2,2);

  \node at (0,2) [above left] {$(i,1)$};
  \node at (2,2) [above right] {$(i+1,1)$};
  \node at (0,0) [below left] {$(i,0)$};
  \node at (2,0) [below right] {$(i+1,0)$};

  \node at (1,2) [above] {$h_i(D)$};
  \node at (1,0) [below] {$h_i(D)$};
  \node at (0,1) [left]  {$v_i(D)$};
  \node at (2,1) [right] {$v_{i+1}(D)$};
  \node at (.8,1) [right] {$i$};
\end{tikzpicture}\]

To define a poset structure on $\mathcal{D}_\mathbf{d}$, we identify the cover relations with small local changes in the $h(D)$ and $v(D)$ values. We will naturally index the ``tiles'' $\bigl\{(i,0), (i,1), (i+1,1), (i+1,0)\bigl\}_{i=1}^{n-1}$ of the grid with $1,2,\ldots,n-1$, working from left to right.

\begin{definition}
    Let $D \in \mathcal{D}_{\mathbf{d}}$ with $\mathbf{d}\in\mathbb{Z}^n_{\ge 0}$ be a mixed dimer such that $h_j(D) > 0$ for some $1 \leq j\ \leq n-1$. The \emph{horizontal twist of $D$ at tile $j$} is the mixed dimer $D'\in \mathcal{D}_{\mathbf{d}}$ satisfying $h_i(D') = h_i(D)$ for $i \neq j$, $v_{i}(D') = v_{i}(D)$ for all $i \neq j, j+1$ , $h_j(D') = h_j(D) - 1$, $v_{j}(D') = v_{j}(D) + 1$, $v_{j+1}(D') = v_{j+1}(D) + 1$.
    
    Similarly, let $D \in \mathcal{D}_{\mathbf{d}}$ be a mixed dimer such that $v_j(D) > 0$ and $v_{j+1}(D) > 0$ for some $1 \leq j\ \leq n-1$. The \emph{vertical twist of $D$ at tile $j$} is the mixed dimer $D''\in \mathcal{D}_{\mathbf{d}}$ satisfying $h_i(D'') = h_i(D)$ for $i \neq j$, $v_{i}(D'') = v_{i}(D)$ for all $i \neq j, j+1$, $h_j(D'') = h_j(D) + 1$,  $v_{j}(D'') = v_{j}(D) - 1$, and $v_{j+1}(D'') = v_{j+1}(D) - 1$.
\end{definition}

We may also say that $D'$ (resp. $D''$) is ``a horizontal twist'' (resp. ``a vertical twist'') of $D$ and not specify the tile label. 

\begin{example}
In \Cref{fig:handvtwist}, we have two dimer covers in $\mathcal{D}_{\mathbf{d}}$ with $\mathbf{d} = (3,2,6,4,1,3)$. We can reach the dimer cover on the right from the dimer cover on the left by first performing horizontal twists at tiles 3 and 5 then a vertical twist at tile 4.

\begin{figure}[h]
    \centering
    \begin{center}
\begin{tikzpicture}
\draw(0,0) to node[below]{1} (1,0) to  node[below]{0} (2,0)to  node[below]{4} (3,0) to  node[below]{0} (4,0) to node[below]{1}(5,0) to node[right]{2}(5,1) to node[above]{1} (4,1) to node[above]{0} (3,1) to node[above]{4} (2,1) to node[above]{0} (1,1) to node[above]{1} (0,1) to node[left]{2}(0,0);
\draw (1,0) to node[right]{1} (1,1);
\draw (2,0) to node[right]{2} (2,1);
\draw (3,0) to node[right]{0} (3,1);
\draw (4,0) to node[right]{0} (4,1);
\end{tikzpicture}
\qquad
\begin{tikzpicture}
\draw(0,0) to node[below]{1} (1,0) to  node[below]{0} (2,0)to  node[below]{3} (3,0) to  node[below]{1} (4,0) to node[below]{0}(5,0) to node[right]{3}(5,1) to node[above]{0} (4,1) to node[above]{1} (3,1) to node[above]{3} (2,1) to node[above]{0} (1,1) to node[above]{1} (0,1) to node[left]{2}(0,0);
\draw (1,0) to node[right]{1} (1,1);
\draw (2,0) to node[right]{3} (2,1);
\draw (3,0) to node[right]{0} (3,1);
\draw (4,0) to node[right]{0} (4,1);
\end{tikzpicture}
\end{center}
    \caption{The dimer cover on the right is the result of twisting the dimer cover on the left horizontally at tiles 3 and 5 then vertically at tile 4.}
    \label{fig:handvtwist}
\end{figure}
\end{example}

Now, we define a partial order $\leqt$ on $\mathcal{D}_{\mathbf{d}}$ in terms of twists as follows.

\begin{definition}\label{def:mdorder}
Define the relation $\leqt$ on $\mathcal{D}_{\mathbf{d}}$ to be the transitive closure of the relation $\lessdott$ where $D \lessdott D'$ if $D'$ can be formed from $D$ by either a vertical twist at an even tile or a horizontal twist at an odd tile.
\end{definition}

A proof that $\leqt$ in fact defines a partial order on $\mathcal{D}_{\mathbf{d}}$ is included in Section~\ref{sec:md}. The poset $(\mathcal{D}_{\mathbf{d}},\preceq_T)$ for $\mathbf{d}=(2,3,1)$ is illustrated in Figure~\ref{fig:D231}.

\begin{figure}
    \centering
    $$\begin{tikzpicture}
        \node (1) at (0,0) {\begin{tikzpicture}
\draw(0,0) to node[below]{2} (1,0) to  node[below]{0} (2,0) to  node[right]{1} (2,1) to  node[above]{0} (1,1) to node[above]{2}(0,1) to node[left]{0}(0,0);
\draw (1,0) to node[right]{1} (1,1);
\end{tikzpicture}};
        \node (2) at (-3,3) {\begin{tikzpicture}
\draw(0,0) to node[below]{1} (1,0) to  node[below]{0} (2,0) to  node[right]{1} (2,1) to  node[above]{0} (1,1) to node[above]{1}(0,1) to node[left]{1}(0,0);
\draw (1,0) to node[right]{2} (1,1);
\end{tikzpicture}};
        \node (3) at (3,3) {\begin{tikzpicture}
\draw(0,0) to node[below]{2} (1,0) to  node[below]{1} (2,0) to  node[right]{0} (2,1) to  node[above]{1} (1,1) to node[above]{2}(0,1) to node[left]{0}(0,0);
\draw (1,0) to node[right]{0} (1,1);
\end{tikzpicture}};
        \node (4) at (-3,6) {\begin{tikzpicture}
\draw(0,0) to node[below]{0} (1,0) to  node[below]{0} (2,0) to  node[right]{1} (2,1) to  node[above]{0} (1,1) to node[above]{0}(0,1) to node[left]{2}(0,0);
\draw (1,0) to node[right]{3} (1,1);
\end{tikzpicture}};
        \node (5) at (3,6) {\begin{tikzpicture}
\draw(0,0) to node[below]{1} (1,0) to  node[below]{1} (2,0) to  node[right]{0} (2,1) to  node[above]{1} (1,1) to node[above]{1}(0,1) to node[left]{1}(0,0);
\draw (1,0) to node[right]{1} (1,1);
\end{tikzpicture}};
        \node (6) at  (0,9) {\begin{tikzpicture}
\draw(0,0) to node[below]{0} (1,0) to  node[below]{1} (2,0) to  node[right]{0} (2,1) to  node[above]{1} (1,1) to node[above]{0}(0,1) to node[left]{2}(0,0);
\draw (1,0) to node[right]{2} (1,1);
\end{tikzpicture}};
\draw (1)--(2)--(4)--(6)--(5)--(3)--(1);
\draw (2)--(5);
        
    \end{tikzpicture}$$
    \caption{$(\mathcal{D}_{\mathbf{d}},\preceq_T)$ for $\mathbf{d}=(2,3,1)$}
    \label{fig:D231}
\end{figure}

\subsection{Complexes}\label{sec:complexes}

A \emph{complex} of length $n$ is a sequence of $n$  vector spaces $M_1,M_2,\ldots,M_n$ with $n-1$ linear maps $f_i: M_i \to M_{i+1}$ satisfying $f_{i+1} \circ f_i = 0$ for all $1 \leq i \leq n-2$. Here, we will assume each $M_i$ is a vector space over $\mathbb{C}$.
Complexes are special instances of ``bound quiver representations," which we briefly recall here. For a more complete treatment, we recommend \cite{Assem,schiffler2014quiver}. 

A \emph{quiver} is simply  a directed graph, while a \emph{quiver representation} is an assignment of a vector space to every vertex of the graph and a linear map to every arrow in such a way that the domain of the map coincides with the tail of the arrow and the codomain coincides with the head. The \emph{path algebra} of a quiver $Q$, denoted $\mathbb{C}Q$, has as elements linear combinations of paths in $Q$. Multiplication of two paths is defined by concatenation if possible, and this is extended linearly to define a multiplication for any two elements of $\mathbb{C}Q$. A quiver representation is the same as a (left) $\mathbb{C}Q$ module.

Given an ideal $I$ of $\mathbb{C}Q$, a \emph{bound quiver representation}, also called a $(Q,I)$-representation, is the analogue of a $\mathbb{C}Q/I$-module. In our setting, we only consider monomial ideals. A monomial corresponds to a path in the quiver, and in this setting a bound quiver representation is one such that the composition of linear maps along any monomial in $I$ is equal to the zero map. Therefore, a complex of length $n$ is the same as a $(Q,I)$-representation where $Q$ is a path graph with $n$ vertices and equioriented arrows, and $I$ is generated by all degree 2 monomials. Denote the associated quotient of the path algebra by $C_n$ and denote the quiver and ideal by $(\overrightarrow{A}_n,\mathrm{rad}^2)$.

The set of $(Q,I)$-representations forms an abelian category, and we can thereby consider notions such as the decomposition of a quiver representation into indecomposable representations. Moreover, by the Krull-Schmidt property, these decompositions are unique up to rearrangement. 

It is straightforward to show that, in the setting of complexes, the indecomposable representations consist of the following two sets. Given $1 \leq i \leq n$, let $S_i$ denote the representation with a 1-dimensional vector space at vertex $i$ and 0's elsewhere. Necessarily, all maps in $S_i$ are 0. Next, given $1 \leq i \leq n-1$, let $P_i$ denote the representation with 1-dimensional vector spaces at vertex $i$ and $i+1$, 0's at all other vertices, and an identity map between the two nonzero vector spaces. For example, below we draw $S_2$ and $P_2$ for $(\overrightarrow{A}_3, \mathrm{rad}^2)$.

\[
 0 \xrightarrow{0}  \mathbb{C} \xrightarrow{0} 0 \qquad  0 \xrightarrow{0}  \mathbb{C} \xrightarrow{1} \mathbb{C}
\]

Given a quiver $Q$ with vertices labeled $v_1,\ldots,v_n$, the \emph{dimension vector} of a representation $M$ is $\mathbf{d}(M) = (d_1,\ldots,d_n)$ where $d_i$ is the dimension of the vector space associated to vertex $v_i$.  For example, with $n = 3$ again, $\mathbf{d}(S_2) = (0,1,0)$ and $\mathbf{d}(P_2) = (0,1,1)$.

Let $\mathrm{mod}(C_n,\mathbf{d})$ denote the space of all $(\overrightarrow{A}_n,\mathrm{rad}^2)$-representations with dimension vector $\mathbf{d}$. There is a natural action on  $\mathrm{mod}(C_n,\mathbf{d})$ by \[
\mathrm{GL}_\mathbf{d} := \prod_{i=1}^n \mathrm{GL}_{d_i}(\mathbb{C})
\]
given by change of basis at each vertex. Explicitly, note that a point in  $\mathrm{mod}(C_n,\mathbf{d})$ is determined by the maps $f_i$. An element $\phi = (\phi_i) \in \mathrm{GL}_\mathbf{d}$ sends $(M_i,f_i) \in \mathrm{mod}(C_n,\mathbf{d})$ to $(M_i',f_i')$ where $M_i \cong M_i'$ and $f_i' = \phi_{i+1} f_i \phi_i^{-1}$. 

Given $M \in  \mathrm{mod}(C_n,\mathbf{d})$, define $\mathcal{O}_M$ to be the $\mathrm{GL}_{\mathbf{d}}$ orbit of $M$. We say that two $(\overrightarrow{A}_n,\mathrm{rad}^2)$-representations are \emph{isomorphic} if they are contained in the same orbit $\mathcal{O}_M$. These orbits partition the space $\mathrm{mod}(C_n,\mathbf{d})$, and the \emph{degeneration order}\footnote{Many other sources use an opposite definition of degeneration order; that is, they define $\mathcal{O}_{M'} \preceq \mathcal{O}_{M}$ whenever $\mathcal{O}_M \subseteq \overline{\mathcal{O}_{M'}}$. Our narrative can easily be translated to this setting by reversing conventions. }
 of the orbits of   $\mathrm{mod}(C_n,\mathbf{d})$ is given by setting $\mathcal{O}_M \preceq \mathcal{O}_{M'}$ whenever $\mathcal{O}_M$ is contained in the Zariski closure of $\mathcal{O}_{M'}$, i.e., $\overline{\mathcal{O}_{M'}}$ \cite{Riedtmann}.

 \begin{example}
We exhibit here six pairwise non-isomorphic representations of $(\overrightarrow{A}_3,\mathrm{rad}^2)$ with dimension vector $(2,3,1)$. 

\[
 \mathbb{C}^{2} \xrightarrow{\begin{pmatrix} 1 & 0 \\ 0 & 1 \\ 0 & 0 \end{pmatrix}}  \mathbb{C}^{3} \xrightarrow{\begin{pmatrix} 0 & 0 & 1 \end{pmatrix}} \mathbb{C}  \qquad \mathbb{C}^{2} \xrightarrow{\begin{pmatrix} 1 & 0 \\ 0 & 0 \\ 0 & 0 \end{pmatrix}}  \mathbb{C}^{3} \xrightarrow{\begin{pmatrix} 0 & 1 & 0 \end{pmatrix}} \mathbb{C}  \qquad  \mathbb{C}^{2} \xrightarrow{\begin{pmatrix} 1 & 0 \\ 0 & 1 \\ 0 & 0 \end{pmatrix}}  \mathbb{C}^{3} \xrightarrow{\begin{pmatrix} 0 & 0 & 0 \end{pmatrix}} \mathbb{C} 
\]

\[
 \mathbb{C}^{2} \xrightarrow{\begin{pmatrix} 0 & 0 \\ 0 & 0 \\ 0 & 0 \end{pmatrix}}  \mathbb{C}^{3} \xrightarrow{\begin{pmatrix} 1 & 0 & 0 \end{pmatrix}} \mathbb{C}  \qquad \mathbb{C}^{2} \xrightarrow{\begin{pmatrix} 1 & 0 \\ 0 & 0 \\ 0 & 0 \end{pmatrix}}  \mathbb{C}^{3} \xrightarrow{\begin{pmatrix} 0 & 0 & 0 \end{pmatrix}} \mathbb{C}  \qquad  \mathbb{C}^{2} \xrightarrow{\begin{pmatrix} 0 & 0 \\ 0 & 0 \\ 0 & 0 \end{pmatrix}}  \mathbb{C}^{3} \xrightarrow{\begin{pmatrix} 0 & 0 & 0 \end{pmatrix}} \mathbb{C} 
\]

\end{example}

For a small example, consider $\mod(C_2,(1,1))$, which has orbits $\mathcal{O}_{S_1 \oplus S_2} = \{ \mathbb{C} \xrightarrow{0} \mathbb{C}\}$ and $\mathcal{O}_{P_1} =\{ \mathbb{C} \xrightarrow{a \neq 0} \mathbb{C}\}$. Here, we have $\mathcal{O}_{S_1 \oplus S_2} 
\preceq_{\mathrm{deg}} \mathcal{O}_{P_1}$ because $\mathcal{O}_{S_1 \oplus S_2} \subset \overline{\mathcal{O}_{P_1}}$. 

In general, a $GL_{\mathbf{d}}$ invariant of $\mathrm{mod}(C_n,\mathbf{d})$ is provided by the sequence of ranks $\mathrm{rk}(f_1),\ldots, \mathrm{rk}(f_{n-1})$. Indeed, this data uniquely determines an orbit $\mathcal{O}_M$ since $\mathrm{rk}(f_i)$ is equal to the multiplicity of $P_i$ in the decomposition of any $M' \in \mathcal{O}_M$ into indecomposable representations. 

The fact that $f_i$ is a linear transformation from $\mathbb{C}^{d_i}$ to $\mathbb{C}^{d_{i+1}}$ and $f_{i+1} \circ f_i = 0$ for all $i$ implies that the rank sequence exactly satisfies Equation (\ref{eq:rankconditions}). Conversely, given any element $(r_1,\ldots,r_{n-1}) \in \mathcal{R}_{\mathbf{d}}$, one can construct a module $M \in \mathrm{mod}(C_n, \mathbf{d})$ whose rank data coincides with $(r_1,\ldots,r_{n-1})$. 
Moreover, the degeneration order in this case can be shown to exactly coincide with our partial order on $\mathcal{R}_{\mathbf{d}}$. For the sake of completeness, we summarize this discussion and outline a brief argument.

\begin{proposition}\label{prop:DegenOrder}
Let $\mathbf{d} \in \mathbb{Z}_{\ge 0}^n$. 
\begin{enumerate}
    \item There is a bijection $\Gamma$ from orbits of modules in $(\overrightarrow{A}_n,\mathrm{rad}^2)$ to $\mathcal{R}_{\mathbf{d}}$ given by sending $\mathcal{O}_M$ to $(r_1,\ldots,r_{n-1})$ where $r_i$ is the multiplicity of $P_i$ in the decomposition of any $M' \in \mathcal{O}_M$.
    \item The map $\Gamma$ is a poset isomorphism from the set of orbits of $\mathrm{mod}(C_n,\mathbf{d})$ under $\preceq_{\mathrm{deg}}$ to $(\mathcal{R}_{\mathbf{d}}, \preceq_D)$.
\end{enumerate}
\end{proposition}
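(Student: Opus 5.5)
For part (1), I will first classify the indecomposables and then read off the orbit from the ranks. Krull--Schmidt together with the list of indecomposables $S_i, P_i$ says every $M$ is isomorphic to $\bigoplus_i P_i^{r_i}\oplus\bigoplus_i S_i^{s_i}$. The rank of $f_i$ on such a sum is exactly $r_i$, since only the summands $P_i$ contribute a nonzero $i$th map. The dimension count then forces $s_i=d_i-r_{i-1}-r_i$, with $r_0=r_n=0$. So the isomorphism class, hence the orbit, is determined by $(r_1,\dots,r_{n-1})=(\mathrm{rk} f_1,\dots,\mathrm{rk} f_{n-1})$, and $\Gamma$ is well defined and injective.

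The tuple lies in $\mathcal{R}_{\mathbf{d}}$ for two reasons. First, $\mathrm{rk} f_i\le\min(d_i,d_{i+1})$. Second, $f_{i+1}f_i=0$ gives $\mathrm{im} f_i\subseteq\ker f_{i+1}$, so $r_i\le d_{i+1}-r_{i+1}$. For surjectivity, given $\mathbf{r}\in\mathcal{R}_{\mathbf{d}}$, the numbers $s_i=d_i-r_{i-1}-r_i$ are nonnegative by \eqref{eq:rankconditions}. The direct sum above with these multiplicities therefore has dimension vector $\mathbf{d}$ and rank tuple $\mathbf{r}$.

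For part (2), one direction is lower semicontinuity of rank. For each $i$ and $k$, the locus $\{\mathrm{rk} f_i\le k\}$ is Zariski closed, being cut out by $(k+1)$-minors. Hence if $\mathcal{O}_M\subseteq\overline{\mathcal{O}_{M'}}$, every point of the closure has $\mathrm{rk} f_i\le r_i(M')$, and so $\Gamma(\mathcal{O}_M)\preceq_D\Gamma(\mathcal{O}_{M'})$.

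For the converse, transitivity of closure containment lets me reduce to the case where $\mathbf{b}$ is obtained from $\mathbf{a}$ by raising a single coordinate $a_j$ by one, with both tuples in $\mathcal{R}_{\mathbf{d}}$. Indeed, given $\mathbf{a}\preceq_D\mathbf{b}$, I can raise coordinates one at a time from $\mathbf{a}$ toward $\mathbf{b}$. Every intermediate tuple is sandwiched coordinatewise below $\mathbf{b}$, and the conditions \eqref{eq:rankconditions} are downward closed, so all intermediates stay in $\mathcal{R}_{\mathbf{d}}$.

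In that one-step case I need an explicit one-parameter degeneration. Start from $M'$ with rank tuple $\mathbf{b}$, which contains a summand $P_j$. Split off that summand and replace $P_j$ by the family $\mathbb{C}\xrightarrow{t}\mathbb{C}$ at vertices $j,j+1$, keeping all other summands fixed. For $t\neq0$ this representation lies in $\mathcal{O}_{M'}$. At $t=0$ it becomes $M'$ with $P_j$ replaced by $S_j\oplus S_{j+1}$, whose rank tuple is $\mathbf{a}$. The family is a line in $\mathrm{mod}(C_n,\mathbf{d})$, and the relations $f_{i+1}f_i=0$ still hold because the summands remain separate. Therefore $\mathcal{O}_M\subseteq\overline{\mathcal{O}_{M'}}$ by part (1).

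The main obstacle is justifying the reduction to single steps, which relies on downward closure of \eqref{eq:rankconditions}. That check is immediate, so the argument should go through cleanly.
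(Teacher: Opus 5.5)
Your argument is correct. Part (1) is essentially the paper's own discussion: the rank of $f_i$ equals the multiplicity of $P_i$, $\mathrm{im} f_i\subseteq\ker f_{i+1}$ gives \eqref{eq:rankconditions}, and any $\mathbf{r}\in\mathcal{R}_{\mathbf{d}}$ is realized by a direct sum of $P_i$'s and $S_i$'s.

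For part (2) you take a genuinely different route. The paper passes through Zwara's theorem to replace $\preceq_{\mathrm{deg}}$ by the extension order. It then uses the fact that the only non-split extensions in $\mathrm{mod}(C_n)$ are $0\to S_{i+1}\to P_i\to S_i\to 0$, so the generating steps are $S_i\oplus S_{i+1}\rightsquigarrow P_i$, i.e.\ raising one coordinate of the rank tuple. You instead prove the two implications directly. You get ``degeneration implies $\preceq_D$'' from the Zariski-closedness of the loci $\{\mathrm{rk} f_i\le k\}$. You get the converse from the explicit line $\mathbb{C}\xrightarrow{t}\mathbb{C}$, which is exactly the geometric realization of that same extension, and chain the single steps using downward closure of \eqref{eq:rankconditions}.

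What you gain is that you never need the hard direction of Zwara's result, namely that every degeneration comes from extensions. Semicontinuity of rank alone shows that degenerations cannot raise ranks. You also make explicit the chaining of single steps, which the paper's sketch leaves implicit. What the paper gains is brevity and a framing within the general theory relating degenerations to extensions.

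In a written version, state the two standard facts you use silently at the end. First, $M_0\in\overline{\mathcal{O}_{M'}}$ because the punctured line $\{t\neq 0\}$ is Zariski dense in the line. Second, orbit closures are $\mathrm{GL}_{\mathbf{d}}$-stable, so this single point yields $\mathcal{O}_M=\mathcal{O}_{M_0}\subseteq\overline{\mathcal{O}_{M'}}$.
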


\begin{proof}
We have already shown the first statement. For the second statement, by \cite[Corollary 4]{Zwara_2000} it is equivalent to study a different order given by extensions of modules. As described in \cite[Lemma 6.2]{Schemes}, the only nontrivial extensions in $\mathrm{mod}(C_n)$ occur between simple modules supported at adjacent vertices. In particular, for each $1 \leq i < n$, there is a non-split short exact sequence 
\[
0 \to S_{i+1} \to P_i \to S_i \to 0.
\]
This shows that we have cover relations in the degeneration order when we replace a summand $S_i \oplus S_{i+1}$ with $P_i$.  This exactly corresponds to increasing one entry in a rank tuple by 1, yielding a cover relation in  $(\mathcal{R}_{\mathbf{d}}, \preceq_D)$.
\end{proof}

\begin{remark}
The algebra $C_n$ is an instance of a \emph{gentle algebra}. Gentle algebras are currently an area of intense study, due both to the beautiful combinatorial classification of their indecomposable modules \cite{zbMATH03989588,zbMATH03904786} and their connections to other areas of mathematics, such as the theory of cluster algebras \cite{zbMATH05704468} and homological mirror symmetry \cite{zbMATH07159952}. The algebra $C_n$  was used as a type of building block in \cite{Schemes} to study general gentle algebras. The degeneration order of gentle algebras was also recently studied by Marquardt in \cite{marquardt2026}, and Proposition \ref{prop:DegenOrder} could be recovered as a special case of her results. 
\end{remark}

\section{A naive bijection}
\label{sec:naive}

In this section, we give a bijection between mixed dimer covers and rank tuples. We call this bijection ``naive'' because, while it is useful for enumeration, a more complicated bijection is introduced in the following section that better respects the partial orders accompanying the two sets (see Remark~\ref{rem:naive}).

 Recall that for $\mathbf{d} \in \mathbb{Z}_{\geq 0}^n$, the set $\mathcal{D}_{\mathbf{d}}$ is the set of mixed dimer covers of the $2 \times n$ grid with $d_i$ edges incident to vertices $(i,0)$ and $(i,1)$. Throughout the remainder of the paper, set $m_i:= \min(d_i,d_{i+1})$.

The definition of mixed dimer covers on a $2 \times n$ grid makes the following immediate.

\begin{lemma}\label{lem:VsHsDs}
If $D \in \mathcal{D}_{\mathbf{d}}$ for $\mathbf{d}\in\mathbb{Z}_{\ge 0}^n$, then the edge multiplicities in $D$ satisfy  
\begin{equation*}
 h_1(D) + v_1(D) = d_1, \qquad h_{n-1}(D) + v_n(D) = d_n. \label{}
\end{equation*}
and for all $1 \leq i \leq n-2$,
\begin{equation*}
 h_i(D) + h_{i+1}(D) + v_{i+1}(D) = d_{i+1}
\end{equation*}
Moreover, every collection of nonnegative integers $\{h_i\}_{1 \leq i \leq n-1} \cup \{v_i\}_{1 \leq i \leq n}$ satisfying these equations determines an element  $D \in \mathcal{D}_{\mathbf{d}}$. 
\end{lemma}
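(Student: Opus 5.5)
The plan is to unwind the vertex condition in the definition of a $\mathbf{d}$-dimer cover at each vertex of the $2\times n$ grid, after first recording that top and bottom horizontal edges in the same column position carry equal multiplicity. So the first step is to justify the remark following Definition~\ref{def:hAndvFunctions}: writing $h_i'(D)$ for the multiplicity of $\{(i,1),(i+1,1)\}$, I will show $h_i'(D)=h_i(D)$ by induction on $i$. At column $1$, the vertex conditions at $(1,0)$ and $(1,1)$ read $h_1+v_1=d_1=h_1'+v_1$, so $h_1=h_1'$. For $1\le i\le n-2$, the conditions at $(i+1,0)$ and $(i+1,1)$ give $h_i+h_{i+1}+v_{i+1}=d_{i+1}=h_i'+h_{i+1}'+v_{i+1}$. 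If $h_i=h_i'$ then $h_{i+1}=h_{i+1}'$.

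With this in hand, the three displayed equations are just the vertex conditions at $(1,0)$, at $(n,0)$, and at the interior vertices $(i+1,0)$ for $1\le i\le n-2$. In each case the edges incident to the vertex are one vertical edge plus one or two horizontal edges, and the claimed equation is the sum of their multiplicities.

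For the converse, given nonnegative integers $h_i$ and $v_i$ satisfying the equations, I define $f$ on $E$ by assigning $v_i$ to the vertical edge in column $i$ and $h_i$ to both horizontal edges $\{(i,0),(i+1,0)\}$ and $\{(i,1),(i+1,1)\}$. Because $f$ is symmetric under swapping rows, the vertex condition at $(j,1)$ is literally the same equation as at $(j,0)$, and the latter is one of the hypotheses. Hence $f\in\mathcal{D}_\mathbf{d}$, and it has the prescribed $h_i(D)$ and $v_i(D)$.

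The only point needing care is the degenerate case $n=1$, where there are no horizontal edges and the statement reduces to $v_1=d_1$. I expect no real obstacle beyond the row-symmetry induction in the first step.
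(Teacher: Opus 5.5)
Your proof is correct, and it is exactly the direct unwinding of the vertex conditions that the paper has in mind when it calls this lemma immediate from the definition. The paper gives no further argument, and it only remarks without proof that top and bottom horizontal multiplicities agree. One refinement: the three equations use only the bottom-row vertices, so your row-symmetry induction is not needed for them. What that induction actually supplies is the uniqueness half of ``determines'', namely that $D$ is recovered from the $h_i$ and $v_i$, and that is how the paper later uses this lemma in Lemma~\ref{lem:HoriztonalDeterminesMatching}. You establish this implicitly but never state it, so it is worth saying explicitly.
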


A more useful result is the following, which says we can focus on a subgraph of the grid graph when considering mixed dimer covers.
 \begin{lemma}\label{lem:HoriztonalDeterminesMatching}
 A mixed dimer cover $D\in \mathcal{D}_{\mathbf{d}}$ for $\mathbf{d}\in\mathbb{Z}_{\ge 0}^n$ is uniquely determined by the tuple $(h_i(D))_{1 \leq i \leq n-1}$ where the values $h_i(D)$ satisfy \[h_1(D) \leq d_1, \qquad h_i(D) + h_{i+1}(D) \leq d_{i+1}\quad \text{for all}\quad 1 \leq i \leq n-2, \qquad \text{and} \qquad h_{n-1}(D) \leq d_n.
 \] Moreover, any tuple $(h_1,\ldots,h_{n-1})$ of nonnegative integers satisfying these inequalities determines an element of $\mathcal{D}_{\mathbf{d}}$.
 \end{lemma}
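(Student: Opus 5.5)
The plan is to reduce everything to Lemma \ref{lem:VsHsDs}, which already identifies $\mathcal{D}_{\mathbf{d}}$ with the set of nonnegative integer solutions $(h_1,\ldots,h_{n-1},v_1,\ldots,v_n)$ of the vertex equations
\[
h_1+v_1=d_1,\qquad h_{n-1}+v_n=d_n,\qquad h_i+h_{i+1}+v_{i+1}=d_{i+1}\ (1\le i\le n-2).
\]
The key observation is that each vertical multiplicity appears in exactly one of these equations. Therefore each $v_j$ is solved for in terms of the horizontal multiplicities:
\[
v_1=d_1-h_1,\qquad v_{i+1}=d_{i+1}-h_i-h_{i+1},\qquad v_n=d_n-h_{n-1}.
\]
(This uses the fact, noted after Definition \ref{def:hAndvFunctions}, that the top and bottom horizontal edges in tile $i$ have the same multiplicity $h_i(D)$.)

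Uniqueness follows first. If $D,D'\in\mathcal{D}_{\mathbf{d}}$ have $h_i(D)=h_i(D')$ for all $i$, then the displayed formulas force $v_j(D)=v_j(D')$ for all $j$. Since every edge of the grid is a top or bottom horizontal edge or a vertical edge, $D=D'$.

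Next, necessity of the inequalities. Since $v_j(D)\ge 0$, the formulas give $h_1(D)\le d_1$, $h_i(D)+h_{i+1}(D)\le d_{i+1}$, and $h_{n-1}(D)\le d_n$.

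Finally, sufficiency. Given nonnegative integers $(h_1,\ldots,h_{n-1})$ satisfying the inequalities, define the $v_j$ by the formulas above. The inequalities are exactly the statement that these $v_j$ are nonnegative integers, and by construction they satisfy the equations of Lemma \ref{lem:VsHsDs}. The "moreover" part of that lemma then produces the required $D\in\mathcal{D}_{\mathbf{d}}$.

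There is no real obstacle. The only points needing care are the degenerate small cases. For $n=1$ the tuple is empty and $v_1=d_1$ is forced. For $n=2$ the middle family of inequalities is vacuous and the two end conditions are the only constraints.
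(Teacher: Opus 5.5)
Your proof is correct and follows essentially the same route as the paper. Both solve each vertex equation of Lemma~\ref{lem:VsHsDs} for the single vertical multiplicity it contains, read off uniqueness and the inequalities from nonnegativity of the $v_j$, and invoke the ``moreover'' part of that lemma for existence. Your explicit formulas for the $v_j$ and your treatment of the degenerate cases $n=1,2$ simply make the paper's brief argument more concrete.
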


 \begin{proof}
The fact that the values $h_i(D)$ satisfy the given inequalities is clear from Lemma \ref{lem:VsHsDs}. Next, suppose $D,D' \in \mathcal{D}_{\mathbf{d}}$ satisfy $h_i(D) = h_i(D')$ for all $1 \leq i \leq n-1$. Then, by using each of the $n$ equations in Lemma~\ref{lem:VsHsDs}, we have $v_i(D) = v_i(D')$ for all $i$, and by the final statement of the same lemma we see that $D = D'$. 

Similarly, given a tuple $(h_1,\ldots,h_{n-1}) \in \mathbb{Z}_{\ge 0}^{n-1}$ satisfying these inequalities,
we can find values $(v_1,\ldots,v_n) \in \mathbb{Z}_{\ge 0}^{n-1}$ such that the $v_i$ satisfy the equations in Lemma \ref{lem:VsHsDs}. Hence, we can build a mixed dimer cover $D$ with $h_i(D) = h_i$ for all $1 \leq i \leq n-1$.
 \end{proof}

 \begin{remark}\label{rem:VsAlsoDetermine}
In light of Lemma \ref{lem:VsHsDs}, the values $v_i(D)$ also uniquely determine $D$. However, it is not true that any sequence $(v_1,\ldots,v_n)$ such that $v_i \leq d_i$ will correspond to an element of $\mathcal{D}_{\mathbf{d}}$. For example, if $\mathbf{d} = (1,8,1)$, then every $D \in \mathcal{D}_{\mathbf{d}}$ has $v_2(D) \geq 6$.
 \end{remark}

 It is evident that the tuples in $\mathcal{R}_{\mathbf{d}}$ satisfy the same inequalities as in Lemma \ref{lem:HoriztonalDeterminesMatching} and that any such tuple determines a vector partition in $\mathcal{P}_\mathbf{d}$. Thus, we have the following bijection.

\begin{corollary}\label{cor:naive_bijection}
    The map $\Theta:\mathcal{D}_{\bfd}\to\mathcal{R}_\bfd$ for $\bfd\in\mathbb{Z}^n_{\ge 0}$ given by \[\Theta(D)=(h_1(D),h_2(D),\ldots,h_{n-1}(D))\] is a bijection.
  
\end{corollary}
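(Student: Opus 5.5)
The plan is to check that the two sets of inequalities coincide exactly, and then to apply Lemma~\ref{lem:HoriztonalDeterminesMatching}. That lemma already tells us that $D \mapsto (h_1(D),\ldots,h_{n-1}(D))$ is a bijection from $\mathcal{D}_{\mathbf{d}}$ onto the set $H_{\mathbf{d}}$ of nonnegative integer tuples satisfying
\[
h_1 \le d_1, \qquad h_i + h_{i+1} \le d_{i+1} \ (1 \le i \le n-2), \qquad h_{n-1} \le d_n .
\]
Injectivity of $\Theta$ is therefore immediate. What remains is to show $H_{\mathbf{d}} = \mathcal{R}_{\mathbf{d}}$ as subsets of $\mathbb{Z}_{\ge 0}^{n-1}$, where $\mathcal{R}_{\mathbf{d}}$ is cut out by \eqref{eq:rankconditions}.

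For $\mathcal{R}_{\mathbf{d}} \subseteq H_{\mathbf{d}}$, take $\mathbf{r} \in \mathcal{R}_{\mathbf{d}}$. Then $r_1 \le \min(d_1,d_2) \le d_1$ and $r_{n-1} \le \min(d_{n-1},d_n) \le d_n$. The middle inequalities $r_i + r_{i+1} \le d_{i+1}$ appear verbatim in \eqref{eq:rankconditions}.

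For $H_{\mathbf{d}} \subseteq \mathcal{R}_{\mathbf{d}}$, the work is to recover $h_i \le m_i = \min(d_i,d_{i+1})$ for every $i$. Nonnegativity lets us drop a term from each two-term inequality. From $h_{i-1} + h_i \le d_i$ we get $h_i \le d_i$ whenever $i \ge 2$, and for $i = 1$ this is the boundary condition $h_1 \le d_1$. Likewise $h_i + h_{i+1} \le d_{i+1}$ gives $h_i \le d_{i+1}$ whenever $i \le n-2$, and for $i = n-1$ this is $h_{n-1} \le d_n$. Together these give $h_i \le m_i$, and the remaining conditions in \eqref{eq:rankconditions} are literally the middle inequalities. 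So $H_{\mathbf{d}} = \mathcal{R}_{\mathbf{d}}$, and combined with the bijectivity from Lemma~\ref{lem:HoriztonalDeterminesMatching}, $\Theta$ is a bijection onto $\mathcal{R}_{\mathbf{d}}$.

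The only point needing care is the boundary indices: the endpoint inequalities $h_1 \le d_1$ and $h_{n-1} \le d_n$ must supply exactly the missing halves of the $\min$ conditions at $i=1$ and $i=n-1$. Degenerate cases such as $n \le 2$, where there are no middle inequalities, should also be checked; these follow directly from the same argument.
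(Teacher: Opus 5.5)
Your proposal is correct and takes the same approach as the paper, which asserts that the inequalities defining $\mathcal{R}_{\mathbf{d}}$ coincide with those in Lemma~\ref{lem:HoriztonalDeterminesMatching} and then applies that lemma. You simply write out the equivalence of the two inequality systems that the paper calls evident, including the boundary indices $i=1$ and $i=n-1$ where $h_1\le d_1$ and $h_{n-1}\le d_n$ supply the missing halves of the $\min$ conditions.
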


\begin{remark}\label{rem:naive}
    Let $D_1$, $D_2$, $D_3$, and $D_4$ be the mixed dimer covers of $\mathcal{D}_{\mathbf{d}}$ for $\mathbf{d}=(2,3,1)$ with $\Theta(D_1)=(h_1(D_1),h_2(D_1))=(2,0)$, $\Theta(D_2)=(1,0)$, $\Theta(D_3)=(2,1)$, and $\Theta(D_4)=(0,1)$. From Figure \ref{fig:D231}, we see 
    \begin{itemize}
        \item $D_1\preceq_T D_2$ and $\Theta(D_2)\preceq_D\Theta(D_1)$;
        \item $D_1\preceq_T D_3$ and $\Theta(D_1)\preceq_D\Theta(D_3)$;
        \item $D_2$ and $D_3$ are unrelated with respect to $\preceq_T$, while $\Theta(D_2)\prec_D\Theta(D_3)$; and
        \item $D_1\preceq_T D_4$, while $\Theta(D_1)$ and $\Theta(D_4)$ are unrelated with respect to $\preceq_D$.
    \end{itemize}
    The above examples show that the bijection $\Theta:\mathcal{D}_\mathbf{d}\to\mathcal{R}_\mathbf{d}$ does not respect the partial orders $\preceq_T$ and $\preceq_D$ on either collection. In Section~\ref{sec:opbij}, a bijection is provided that better respects these partial orders.
\end{remark}

Thanks to recent work by Claussen and Ovenhouse \cite{ClaussenOvenhouse}, Corollary \ref{cor:naive_bijection} has fairly immediate enumerative consequences. 

\begin{definition}[Definition 4, \cite{ClaussenOvenhouse}]
    Define $R_{a,b}$ to be the $(a+1) \times (b+1)$ matrix where entry $(i,j)$ is 1 if $i+j \leq b+2$ and otherwise is 0.
\end{definition}
For example, 
    \[
R_{3,1}=\begin{pmatrix} 1 & 1 \\ 1 & 0 \\ 0 & 0 \\ 0 & 0 \end{pmatrix}.
 \]

 These matrices were used to enumerate mixed dimer covers on the $2 \times n$ grid.

 \begin{theorem}[Theorem 1, \cite{ClaussenOvenhouse}]\label{thm:ClaussenOvenhouse}
  Given $\mathbf{d} = (d_1,d_2,\ldots,d_n) \in \mathbb{Z}_{\ge 0}^n$, the cardinality of $\mathcal{D}_{\mathbf{d}}$  is the $(1,1)$-entry of \[
R_{d_1,d_1}R_{d_1,d_2} R_{d_2,d_3} \cdots R_{d_{n-1},d_n}.
    \]  
 \end{theorem}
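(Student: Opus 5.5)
The plan is to prove the statement with a transfer-matrix argument, using Lemma \ref{lem:HoriztonalDeterminesMatching} (equivalently Corollary \ref{cor:naive_bijection}). By that lemma, $|\mathcal{D}_{\mathbf{d}}|$ is the number of tuples $(h_1,\ldots,h_{n-1})$ of nonnegative integers satisfying $h_1\le d_1$, $h_{n-1}\le d_n$ and $h_i+h_{i+1}\le d_{i+1}$. It is convenient to introduce dummy variables $h_0=0$ and $h_n=0$. The constraints then become uniformly
\[
h_{i-1}+h_i\le d_i \quad (1\le i\le n), \qquad 0\le h_i\le \min(d_i,d_{i+1}).
\]
The bound $h_i\le\min(d_i,d_{i+1})$ is automatic, since both adjacent sums are bounded and all $h$'s are nonnegative.

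Next we interpret the matrices. Index the rows of $R_{a,b}$ by $x\in\{0,\ldots,a\}$ (row $x+1$) and the columns by $y\in\{0,\ldots,b\}$ (column $y+1$). The entry at $(x+1,y+1)$ is $1$ exactly when $x+y+2\le b+2$, that is, when $x+y\le b$. So $R_{a,b}$ is the indicator matrix of the condition $x+y\le b$, with $x$ ranging up to $a$ and $y$ up to $b$.

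We then read $R_{d_1,d_1}R_{d_1,d_2}\cdots R_{d_{n-1},d_n}$ as a sum over index sequences. The $(1,1)$-entry equals the number of sequences $(x_0,x_1,\ldots,x_n)$ with $x_0=0$, $x_n=0$, $x_i\in\{0,\ldots,d_i\}$ for $1\le i\le n-1$, such that each consecutive pair satisfies its matrix's condition. For the first factor $R_{d_1,d_1}$ the condition is $x_0+x_1\le d_1$. For the factor $R_{d_{i-1},d_i}$ ($2\le i\le n$) the condition is $x_{i-1}+x_i\le d_i$. The final column index $x_n$ ranges over $\{0,\ldots,d_n\}$, but we take the first column, so $x_n=0$. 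The column range of each factor matches the row range of the next, since both are indexed by $d_i$, so the product is well defined.

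Setting $h_i=x_i$ identifies these sequences exactly with the tuples above. The final constraint becomes $h_{n-1}+0\le d_n$ and the first becomes $0+h_1\le d_1$. The range restriction $x_i\le d_i$ is implied by $x_{i-1}+x_i\le d_i$, so it imposes nothing new. Finally, we check that no constraint is lost. The tuple constraints require $h_{i}\le d_{i}$ and $h_i\le d_{i+1}$, and both follow from the inequalities encoded.

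The main obstacle is purely bookkeeping: making sure the off-by-one convention in $R_{a,b}$ (entries $i+j\le b+2$ with $1$-based indices) lines up with the inequality $x+y\le b$, and that the boundary factor $R_{d_1,d_1}$ together with the $(1,1)$ selection correctly encode $h_0=h_n=0$. I would sanity-check the indexing on $\mathbf{d}=(2,3,1)$, where the count should be $6$. After that, a short induction on $n$ (counting vectors $v^{(k)}=e_1^{T}R_{d_1,d_1}\cdots R_{d_{k-1},d_k}$ whose $y$-th entry counts valid prefixes ending in $h_k=y$) makes the argument rigorous.
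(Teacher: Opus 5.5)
Your proof is correct. It differs from the paper only in that the paper gives no argument here: it quotes this statement from Claussen and Ovenhouse, whose formula covers a family of graphs containing the grid (and has a weighted version, their Theorem 2), and then combines it with Corollary \ref{cor:naive_bijection}.

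You instead give a self-contained transfer-matrix count for the $2\times n$ grid. Lemma \ref{lem:HoriztonalDeterminesMatching} reduces $\mathcal{D}_{\mathbf d}$ to tuples with $h_{i-1}+h_i\le d_i$ for $1\le i\le n$, where $h_0=h_n=0$. Reading $R_{a,b}$ as the indicator of $x+y\le b$ on $\{0,\ldots,a\}\times\{0,\ldots,b\}$ then makes the expansion of the $(1,1)$-entry count exactly these tuples. The only points needing care are the ones you check: the ranges $x_i\le d_i$ are redundant, and the boundary constraints come from $x_0=0$ and $x_n=0$. The direct expansion of the product is already a complete argument, so the induction on $n$ you mention at the end is optional.

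Your route buys three things.
\begin{enumerate}
\item The inequalities you count are equivalent to the conditions \eqref{eq:rankconditions} defining $\mathcal{R}_{\mathbf d}$. So Corollary \ref{cor:Enumeration} needs neither dimer covers nor the citation.
\item Taking $x_0=i-1$ and $x_n=j-1$ recovers the interpretation of the other entries given after Example \ref{ex:MatrixProductEnumeration}: entry $(i,j)$ counts the case $\mathbf d'=(d_1+1-i,d_2,\ldots,d_{n-1},d_n+1-j)$.
\item Since $\ell(p)=\sum_i d_i-\sum_i r_i$, replacing the entry at $(x,y)$ by $q^{b-y}$ gives Proposition \ref{prop:qAnalog} directly. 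For the $(1,1)$-entry, the first row of $U_{d_1}(q)$ agrees with that of $V_{d_1,d_1}(q)$.
\end{enumerate}
The citation's advantage is a single source for a broader family of graphs with general edge weights. For the straight grid alone, your argument makes it unnecessary.
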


Combining Theorem \ref{thm:ClaussenOvenhouse} and Corollary \ref{cor:naive_bijection} yields the following.

\begin{corollary}\label{cor:Enumeration}
Given $\mathbf{d} = (d_1,d_2,\ldots,d_n) \in \mathbb{Z}_{\ge 0}^n$, the cardinality of $\mathcal{R}_{\mathbf{d}}$ is the $(1,1)$-entry of 
\[
R_{d_1,d_1}R_{d_1,d_2} R_{d_2,d_3} \cdots R_{d_{n-1},d_n}.
    \]  
\end{corollary}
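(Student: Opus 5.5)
The corollary follows by composing two results already stated. Corollary \ref{cor:naive_bijection} says the map $\Theta:\mathcal{D}_{\mathbf{d}}\to\mathcal{R}_{\mathbf{d}}$, $D\mapsto (h_1(D),\ldots,h_{n-1}(D))$, is a bijection, so $|\mathcal{R}_{\mathbf{d}}| = |\mathcal{D}_{\mathbf{d}}|$. Theorem \ref{thm:ClaussenOvenhouse} identifies $|\mathcal{D}_{\mathbf{d}}|$ with the $(1,1)$-entry of $R_{d_1,d_1}R_{d_1,d_2}\cdots R_{d_{n-1},d_n}$. Substituting the second equality into the first gives the statement.

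The only real content lies in Corollary \ref{cor:naive_bijection}, and I would check that both sides impose the same inequalities. By Lemma \ref{lem:HoriztonalDeterminesMatching}, elements of $\mathcal{D}_{\mathbf{d}}$ correspond exactly to nonnegative integer tuples $(h_1,\ldots,h_{n-1})$ satisfying
\[
h_1\le d_1,\qquad h_i+h_{i+1}\le d_{i+1}\ (1\le i\le n-2),\qquad h_{n-1}\le d_n.
\]
Tuples in $\mathcal{R}_{\mathbf{d}}$ satisfy \eqref{eq:rankconditions}, namely $r_i\le \min(d_i,d_{i+1})$ and $r_i+r_{i+1}\le d_{i+1}$. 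These are the same two systems, which I would show in both directions.

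\emph{From $\mathcal{R}_{\mathbf{d}}$ to the grid conditions.} The pair conditions coincide verbatim. The end conditions $h_1\le d_1$ and $h_{n-1}\le d_n$ follow from $r_1\le\min(d_1,d_2)$ and $r_{n-1}\le\min(d_{n-1},d_n)$.

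\emph{From the grid conditions to $\mathcal{R}_{\mathbf{d}}$.} The pair condition $h_i+h_{i+1}\le d_{i+1}$, together with nonnegativity, gives $h_i\le d_{i+1}$ and $h_{i+1}\le d_{i+1}$. So every interior $h_i$ is bounded by both neighbouring $d$'s. The endpoint bounds supply the remaining comparisons: $h_1\le d_1$ and $h_{n-1}\le d_n$. Hence $h_i\le \min(d_i,d_{i+1})$ for all $i$.

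The degenerate case $n=2$, where there are no pair conditions, reduces directly to $h_1\le\min(d_1,d_2)$.

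No step is a serious obstacle. The one point needing care is confirming that the matrix product in Theorem \ref{thm:ClaussenOvenhouse} is quoted with the same indexing convention as $\mathcal{D}_{\mathbf{d}}$, that is, with $d_i$ at both $(i,0)$ and $(i,1)$, so that it can be transferred verbatim.
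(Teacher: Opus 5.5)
Your proposal is correct and takes the same approach as the paper, which obtains this corollary by combining the bijection $\Theta$ of Corollary~\ref{cor:naive_bijection} with Theorem~\ref{thm:ClaussenOvenhouse}. Your two-way check that the inequalities of Lemma~\ref{lem:HoriztonalDeterminesMatching} coincide with \eqref{eq:rankconditions} fills in a step the paper calls evident, and the check is sound: the interior pair conditions supply $h_i\le d_{i+1}$ and $h_{i+1}\le d_{i+1}$, and the two endpoint bounds cover the remaining cases.
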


\begin{example}\label{ex:MatrixProductEnumeration}
Let $\mathbf{d} = (2,3,1)$. We compute the product from Theorem \ref{thm:ClaussenOvenhouse}: \[
R_{2,2}R_{2,3} R_{3,1} = \begin{pmatrix} 1 & 1 & 1 \\ 1 & 1& 0\\ 1 & 0 & 0 \end{pmatrix}\begin{pmatrix} 1 & 1 & 1 & 1 \\ 1 & 1 & 1 & 0 \\ 1 & 1 & 0 & 0 \end{pmatrix} \begin{pmatrix} 1 & 1 \\ 1 & 0 \\ 0 & 0 \\ 0 & 0 \end{pmatrix} = \begin{pmatrix} 1 & 1 & 1 \\ 1 & 1& 0\\ 1 & 0 & 0 \end{pmatrix}\begin{pmatrix} 2 & 1 \\ 2 & 1 \\ 2 & 1 \end{pmatrix} = \begin{pmatrix} 6 & 3\\ 4& 2 \\ 2 & 1 \end{pmatrix}
    \]
From Example \ref{ex:DimerCovers231}, we see that there are indeed six mixed dimer covers in $\mathcal{D}_{(2,3,1)}$, which are in correspondence with the six tuples $\mathcal{R}_{(2,3,1)}$, as exhibited in Example \ref{ex:RR} .
\end{example}

\begin{remark}
Theorem 1 of \cite{ClaussenOvenhouse} in fact gives an explanation to all entries of the matrix product.
Entry $(i,j)$ gives the cardinality of $\mathcal{D}_{\bfd'}$
where $\bfd'=(d_1+1-i,d_2,d_2,\ldots,d_{n-1},d_n+1-j)$. For instance, in Example \ref{ex:MatrixProductEnumeration}, the 4 in entry $(2,1)$ is the cardinality of $\mathcal{D}_{(1,3,1)}$. Equivalently, this is the number of mixed dimer covers $D \in \mathcal{D}_{(2,3,1)}$ with $v_1(D) \geq 1$. Following our combinatorial bijections, this is the cardinality of the subset of vector partitions in $\mathcal{P}_{(2,3,1)}$ where the vector $\mathbf{e}_1$ appears with multiplicity at least one.
\end{remark}

For some representation-theoretic computations 
(e.g. the $q$-analog for Kostant's weight multiplicity formula \cite[Proposition 9.2]{Lusztig}), it is often useful
to enumerate vector partitions by their number of parts.
To this end, \cite[Theorem 2]{ClaussenOvenhouse} can be
specialized to yield the following
proposition. 

Let $V_{a,b}(q)$ be the $(a+1)\times(b+1)$ matrix where
entry $(i,j)$ is $q^{b+1-j}$ if $i+j\leq b+2$ and
otherwise is $0$. For example,
\[V_{2,3}(q)=\begin{pmatrix}
    q^3 & q^2 & q & 1\\
    q^3 & q^2 & q & 0 \\
    q^3 & q^2 & 0 &0
\end{pmatrix}.\]
Let $U_a(q)$ be the square $(a+1)\times(a+1)$ matrix
where entry $(i,j)$ is $q^{a+2-i-j}$ if $i+j\leq a+2$
and otherwise is 0. For example,
\[U_3(q)=\begin{pmatrix}
    q^3 & q^2 & q & 1\\
    q^2 & q & 1 & 0 \\
    q & 1 & 0 & 0\\
    1 & 0 & 0 & 0
\end{pmatrix}.\]

Given a vector partition $p$, let $\ell(p)$ denote the number of parts, i.e., the number of vectors. Recall every element of $\mathcal{R}_\mathbf{d}$ uniquely determines a vector partition in $\mathcal{P}_\mathbf{d}$. Let $\Theta'(D)$ be the vector partition associated to $\Theta(D)$. 

\begin{proposition}\label{prop:qAnalog}
    Given $\mathbf{d} = (d_1,d_2,\ldots,d_n) \in \mathbb{Z}_{\ge 0}^n$, the $q$-analog
    \[\sum_{p\in \PP_\bfd} q^{\ell(p)}\]
    can be computed as the $(1,1)$-entry of
    the product
    \[U_{d_1}(q)V_{d_1,d_2}(q)V_{d_2,d_3}(q)\cdots V_{d_{n-1},d_n}(q).\]
\end{proposition}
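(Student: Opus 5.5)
The plan is a direct transfer-matrix expansion of the $(1,1)$-entry, rather than a specialization of \cite[Theorem 2]{ClaussenOvenhouse}. First I would rewrite the weight $\ell(p)$ in terms of the rank tuple. If $p\in\PP_\bfd$ has rank tuple $\bfr=(r_1,\ldots,r_{n-1})$, then $p$ contains $r_i$ copies of $\mathbf{e}_i+\mathbf{e}_{i+1}$. The remaining coordinates are filled by $d_i - r_{i-1}-r_i$ copies of $\mathbf{e}_i$, with $r_0=r_n=0$. Hence
\[
\ell(p)=\sum_{i=1}^{n-1} r_i+\sum_{i=1}^n (d_i-r_{i-1}-r_i)=\sum_{i=1}^n d_i-\sum_{i=1}^{n-1} r_i .
\]
So the claim is that the $(1,1)$-entry equals $\sum_{\bfr\in\RR_\bfd} q^{|\bfd|-|\bfr|}$, where $|\bfd|=\sum_i d_i$ and $|\bfr|=\sum_i r_i$.

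Next I would expand the $(1,1)$-entry of $U_{d_1}(q)V_{d_1,d_2}(q)\cdots V_{d_{n-1},d_n}(q)$ as a sum over index sequences. The row index of $U_{d_1}$ is fixed to $1$. Let $j_1,\ldots,j_{n-1}$ be the intermediate summation indices, and note the final column index is forced to be $1$. The matrix sizes are compatible: the column index of $V_{d_{k-1},d_k}$ and the row index of $V_{d_k,d_{k+1}}$ both range over $1,\ldots,d_k+1$. Set $h_k=j_k-1\ge 0$. I would then read off, factor by factor, when each entry is nonzero and what its weight is.
\begin{itemize}
\item The entry $U_{d_1}(1,j_1)$ is nonzero iff $h_1\le d_1$, with weight $q^{d_1-h_1}$.
\item For $2\le k\le n-1$, the entry $V_{d_{k-1},d_k}(j_{k-1},j_k)$ is nonzero iff $h_{k-1}+h_k\le d_k$, with weight $q^{d_k-h_k}$.
\item The last entry $V_{d_{n-1},d_n}(j_{n-1},1)$ is nonzero iff $h_{n-1}\le d_n$, with weight $q^{d_n}$.
\end{itemize}
Multiplying, each nonzero term is exactly $q^{|\bfd|-\sum_k h_k}$.

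It remains to check that the surviving index sequences are exactly the elements of $\RR_\bfd$. The conditions collected above are precisely the inequalities of Lemma \ref{lem:HoriztonalDeterminesMatching}. As observed before Corollary \ref{cor:naive_bijection}, these coincide with the rank conditions \eqref{eq:rankconditions}. The only bound not directly visible among the nonzero conditions is $h_k\le \min(d_k,d_{k+1})$. The part $h_k\le d_k$ is automatic from the range of the index $j_k$. The part $h_k\le d_{k+1}$ follows from $h_k+h_{k+1}\le d_{k+1}$, or from $h_{n-1}\le d_n$ when $k=n-1$.

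Therefore the $(1,1)$-entry equals $\sum_{\bfr\in\RR_\bfd}q^{|\bfd|-|\bfr|}=\sum_{p\in\PP_\bfd}q^{\ell(p)}$. Under $\Theta$, this is equivalently a sum over $\mathcal{D}_\bfd$ weighted by $q^{|\bfd|-\sum_k h_k(D)}$.

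The main obstacle is purely bookkeeping. One must keep the indexing conventions of $U$ and $V$ straight, in particular the off-by-one shift $h_k=j_k-1$. One must also confirm that the boundary factors $U_{d_1}$ and the final column $j_n=1$ contribute the correct constraints and powers of $q$.
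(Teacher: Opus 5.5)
Your proof is correct, and it takes a genuinely different route from the paper. The paper specializes the weighted dimer enumeration of \cite[Theorem 2]{ClaussenOvenhouse} (with $a_i=b_i=q$, $c_i=1$) to get the generating function of $\mathcal{D}_\bfd$ by $\sum_i h_i(D)+\sum_i v_i(D)$. It then transfers this to $\PP_\bfd$ via $\Theta'$, observing that $v_i(D)$ and $h_i(D)$ are the multiplicities of $\mathbf{e}_i$ and $\mathbf{e}_i+\mathbf{e}_{i+1}$.

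You never pass through dimer covers or the external theorem. Instead you rewrite $\ell(p)=|\bfd|-|\bfr|$, which is the rank-tuple counterpart of $\sum_i h_i+\sum_i v_i=|\bfd|-\sum_i h_i$. You then expand the product directly as a transfer-matrix sum: the support of $V_{d_{k-1},d_k}(q)$ encodes exactly $r_{k-1}+r_k\le d_k$, and the index range encodes $r_k\le d_k$. Your entry-by-entry bookkeeping is right, and it reproduces $q^6+2q^5+2q^4+q^3$ for $\bfd=(2,3,1)$.

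Your route is self-contained and actually checks the specialization that the paper only asserts. At $q=1$, where $U_a(1)=R_{a,a}$ and $V_{a,b}(1)=R_{a,b}$, it also gives an independent proof of Corollary~\ref{cor:Enumeration} without Theorem~\ref{thm:ClaussenOvenhouse}. The paper's route is shorter and ties the formula to the weighted dimer model. Even so, the interpretation of the other entries in the subsequent remark also follows from your expansion by changing the boundary row and column indices.

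The only omission is the trivial case $n=1$, which your three-case analysis implicitly excludes. There the product is just $U_{d_1}(q)$, whose $(1,1)$-entry $q^{d_1}$ matches the unique partition into $d_1$ copies of $\mathbf{e}_1$.
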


\begin{proof}
If we set $a_i=b_i=q$ and $c_i=1$ in \cite[Theorem 2]{ClaussenOvenhouse}, then the coefficient of $q^k$ in the $(1,1)$-entry of the matrix product is the number of mixed dimer covers $D$ satisfying $\sum_i h_i(D) + \sum_i v_i(D) = k$. Notice $v_i(D)$ is equal to the multiplicity of $\mathbf{e}_i$ in $\Theta'(D)$ and $h_i(D)$ is equal to the multiplicity of $\mathbf{e}_i+\mathbf{e}_{i+1}$ in $\Theta'(D)$. Therefore, the number $k$ also counts the total number of vectors in $\Theta'(D)$.
\end{proof}

\begin{example}\label{ex:qMatrixProductEnumeration}
Let $\mathbf{d} = (2,3,1)$. We compute the product from Proposition \ref{prop:qAnalog}. \[
U_{2}(q)V_{2,3}(q) V_{3,1}(q) =
\begin{pmatrix}
    q^2 & q & 1 \\
    q & 1& 0\\
    1 & 0 & 0
\end{pmatrix}
\begin{pmatrix}
    q^3 & q^2 & q & 1 \\
    q^3 & q^2 & q & 0 \\
    q^3 & q^2 & 0 & 0
\end{pmatrix}
\begin{pmatrix}
    q & 1 \\
    q & 0 \\
    0 & 0 \\
    0 & 0
\end{pmatrix} =
\begin{pmatrix}
    q^6 + 2q^5 + 2q^4 + q^3 & q^5+q^4+q^3\\
    q^5+2q^4+q^3& q^4+q^3 \\
    q^4+q^3 & q^3
\end{pmatrix}
    \]
Compare the (1,1) entry here to Example \ref{ex:PP}, and note how it describes how the six vector partitions break up by number of parts.
\end{example}

\begin{remark}
    As before, each entry in the resulting matrix
    has an interpretation. Entry $(i,j)$ gives the
    $q$-analog for $\bfd'=(d_1+1-i,d_2,d_2,\ldots,d_{n-1},d_n+1-j)$. Multiplying by $q^{i+j-2}$ yields the $q$-analog for the subset
    of vector partitions in $\mathcal{P}_\bfd$ such that
    $\mathbf{e}_1$ (resp. $\mathbf{e}_m$) appear at least $i$ (resp. $j$) times.
\end{remark}

\section{Lattice of mixed dimer covers}\label{sec:md}

In this section, we investigate the partial order on mixed dimer covers defined in Section~\ref{sec:prelim}. Specifically, in
\begin{itemize}
    \item[$(1)$] Proposition~\ref{prop:poset} we verify that $(\mathcal{D}_\mathbf{d},\preceq_T)$ defines a poset structure;
    \item[$(2)$] Proposition~\ref{prop:uniquemin} we identify the unique minimal element of $(\mathcal{D}_\mathbf{d},\preceq_T)$; and
    \item[$(3)$] Theorem~\ref{thm:dlattice} we show that $(\mathcal{D}_\mathbf{d},\preceq_T)$ is a distributive lattice by constructing a poset $(P_\mathbf{d},\preceq_L)$ for which $(\mathcal{J}(P_\mathbf{d}),\subseteq)$ is isomorphic to $(\mathcal{D}_\mathbf{d},\preceq_T)$.
\end{itemize}
This section builds on the original work by Propp \cite{propp2002lattice} who demonstrated a lattice structure on single dimers (i.e. all $d_i = 1$),
work by Musiker, Ovenhouse, Schiffler, and Zhang \cite{zbMATH08171225} who discussed this structure for higher dimers (i.e. all $d_i = d$ for a fixed integer $d$), and work by Claussen and Ovenhouse \cite{ClaussenOvenhouse} concerning certain mixed dimer covers.

To start, we verify that the relation $\preceq_T$ of Definition~\ref{def:mdorder} does in fact define a partial order on $\mathcal{D}_{\mathbf{d}}$. For the proof, we require the following lemma, which shows that $\mathcal{D}_\mathbf{d}$ is closed under the appropriate twists.

\begin{lemma}
Let $D \in \mathcal{D}_{\mathbf{d}}$ with $\mathbf{d}\in\mathbb{Z}^n_{\ge 0}$. If $D'$ is the horizontal twist or vertical twist of $D$, then $D' \in \mathcal{D}_{\mathbf{d}}$.
\end{lemma}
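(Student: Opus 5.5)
The plan is to verify the defining equations of Lemma~\ref{lem:VsHsDs} for the new multiplicities, together with nonnegativity. The final statement of that lemma then guarantees that the resulting collection of integers is an element of $\mathcal{D}_{\mathbf{d}}$.

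First I check that each twist is well defined as a collection of nonnegative integers. For the horizontal twist at tile $j$, we assume $h_j(D)>0$, so $h_j(D')=h_j(D)-1\ge 0$, and the other values either stay the same or increase by one. For the vertical twist at tile $j$, we assume $v_j(D)>0$ and $v_{j+1}(D)>0$, so $v_j(D'')$ and $v_{j+1}(D'')$ stay nonnegative, while $h_j$ increases and everything else is unchanged.

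Next I check the vertex equations of Lemma~\ref{lem:VsHsDs}. The only equations involving $h_j$, $v_j$ or $v_{j+1}$ are those at columns $j$ and $j+1$. At column $j$ the relevant sum is $h_{j-1}+h_j+v_j$, where the term $h_{j-1}$ is absent if $j=1$. At column $j+1$ the sum is $h_j+h_{j+1}+v_{j+1}$, where $h_{j+1}$ is absent if $j+1=n$.

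In a horizontal twist, $h_j$ decreases by $1$ while $v_j$ and $v_{j+1}$ each increase by $1$. Hence both sums are unchanged, and they still equal $d_j$ and $d_{j+1}$ respectively. The vertical twist changes the same quantities with opposite signs, so these sums are again preserved. Every other equation involves only unchanged variables.

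The computation is routine; the only care needed is the boundary cases $j=1$ and $j=n-1$, where the equation has two terms instead of three, and the cancellation is the same there. Since the edge multiplicities in the two rows of the grid are the same by the observation following Definition~\ref{def:hAndvFunctions}, satisfying these equations with nonnegative values suffices. The lemma then follows from Lemma~\ref{lem:VsHsDs}.
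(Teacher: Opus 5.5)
Your proof is correct and is essentially the paper's argument in different coordinates. The paper works with the $h$-only inequalities of Lemma~\ref{lem:HoriztonalDeterminesMatching}: a horizontal twist only lowers $h_j$, and for a vertical twist it uses Lemma~\ref{lem:VsHsDs} to turn $v_j(D),v_{j+1}(D)>0$ into the slack $h_{j-1}(D)+h_j(D)<d_j$ and $h_j(D)+h_{j+1}(D)<d_{j+1}$ needed to raise $h_j$. You instead verify the vertex equalities of Lemma~\ref{lem:VsHsDs} directly, which is equivalent since the $v_i$ are exactly the slack variables in those inequalities, and which treats both twists uniformly as degree-preserving local moves.
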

\begin{proof}
 From Lemma \ref{lem:HoriztonalDeterminesMatching}, it is clear that if $D'$ is a horizontal twist of $D$, then $D' \in \mathcal{D}_{\mathbf{d}}$.

Now, suppose that $D'$ is the vertical twist of $D$ at tile $j\in [n-1]$. This means $v_j(D) > 0$ and $v_{j+1}(D) > 0$. Lemma \ref{lem:VsHsDs} implies that the following inequalities are strict: $h_{j-1}(D) + h_j(D) < d_j$ and $h_j(D) + h_{j+1}(D) < d_{j+1}$. Therefore, by Lemma \ref{lem:HoriztonalDeterminesMatching}, $D'$ is again an element of $\mathcal{D}_{\mathbf{d}}$.
 \end{proof}

\begin{proposition}\label{prop:poset}
The pair $(\mathcal{D}_{\mathbf{d}},\leqt)$ with $\mathbf{d}\in\mathbb{Z}^n_{\ge 0}$ forms a poset.
\end{proposition}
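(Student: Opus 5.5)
Since $\leqt$ is defined as the reflexive–transitive closure of $\lessdott$, reflexivity and transitivity are automatic. The only property that needs proof is antisymmetry, which amounts to showing that the directed graph on $\mathcal{D}_{\mathbf{d}}$ with edges $D \to D'$ whenever $D \lessdott D'$ has no directed cycles. We also use that the preceding lemma guarantees every twist stays inside $\mathcal{D}_{\mathbf{d}}$, so this graph is well defined. To rule out cycles, we will exhibit a function $\phi:\mathcal{D}_{\mathbf{d}}\to\mathbb{Z}$ that strictly increases along every relation $D\lessdott D'$.

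By Lemma~\ref{lem:HoriztonalDeterminesMatching}, each $D$ is determined by $(h_1(D),\ldots,h_{n-1}(D))$, and each twist at tile $j$ changes only $h_j$, by exactly $\pm 1$. The two allowed moves are:
\begin{itemize}
\item a horizontal twist at an odd tile $j$, which lowers $h_j$ by $1$;
\item a vertical twist at an even tile $j$, which raises $h_j$ by $1$.
\end{itemize}
We therefore set
\[
\phi(D)=\sum_{j=1}^{n-1}(-1)^{j}\,h_j(D).
\]
A horizontal twist at odd $j$ changes $\phi$ by $(-1)^j\cdot(-1)=+1$. A vertical twist at even $j$ changes $\phi$ by $(+1)\cdot(+1)=+1$. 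Hence $D\lessdott D'$ implies $\phi(D')=\phi(D)+1$.

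Consequently, if $D\leqt D'$ via a chain of $k$ covering steps, then $\phi(D')=\phi(D)+k$. Now suppose $D\leqt D'$ and $D'\leqt D$. Composing the two chains gives a chain from $D$ back to $D$ of total length $k+k'$, so $\phi(D)=\phi(D)+k+k'$. This forces $k=k'=0$, hence $D=D'$, which proves antisymmetry.

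There is no real obstacle here. The only point requiring care is bookkeeping: confirming that twists only modify the $h_j$ at the twisted tile (the changes in the $v_i$ are determined by the $h$'s) and getting the parity signs right so that both allowed move types increase $\phi$. As a byproduct, $\phi$ (suitably shifted) serves as a rank-type function for the poset, which may be useful later.
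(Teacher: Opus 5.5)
Your proof is correct and is essentially the paper's argument. The paper also observes that each allowed twist changes a single $h_i$ by one in a parity-determined direction (decreasing at odd tiles, increasing at even tiles), and concludes antisymmetry from this. Your potential $\phi(D)=\sum_j(-1)^j h_j(D)$ just makes that terse step explicit, and as a bonus it gives a rank function for $(\mathcal{D}_{\mathbf{d}},\leqt)$.
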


\begin{proof}
    The relation $\preceq$ is reflexive and transitive by definition. As for antisymmetry, note that a vertical twist at tile $i$ of a mixed dimer cover $D$ for $i$ even increases $h_i(D)$ by one, while a horizontal twist at tile $i$ for $i$ odd decreases $h_i(D)$ by one. Consequently, it follows that $\preceq$ is antisymmetric.
\end{proof}

 Next, we identify the unique minimal element of such posets.

\begin{definition}\label{def:Dmin}
For $\mathbf{d}\in\mathbb{Z}_{\ge 0}^n$, 
recall that $m_i=\min(d_i,d_{i+1})$.
Let $D_{\min} \in \mathcal{D}_{\mathbf{d}}$ be the mixed dimer cover satisfying \[
h_i(D_{\min}) = \begin{cases} m_i & i \text{ is odd}\\
0 & i \text{ is even}
\end{cases} \qquad \text{for}~i\in [n-1].
\]
\end{definition}

\begin{example}\label{ex:Dmin}
If $\mathbf{d} = (3,2,6,4,1,3)$, then $D_{\min}$ is as follows.

\begin{center}
\begin{tikzpicture}
\draw(0,0) to node[below]{2} (1,0) to  node[below]{0} (2,0)to  node[below]{4} (3,0) to  node[below]{0} (4,0) to node[below]{1}(5,0) to node[right]{2}(5,1) to node[above]{1} (4,1) to node[above]{0} (3,1) to node[above]{4} (2,1) to node[above]{0} (1,1) to node[above]{2} (0,1) to node[left]{1}(0,0);
\draw (1,0) to node[right]{0} (1,1);
\draw (2,0) to node[right]{2} (2,1);
\draw (3,0) to node[right]{0} (3,1);
\draw (4,0) to node[right]{0} (4,1);
\end{tikzpicture}
\end{center}
\end{example}

\begin{remark}\label{rmk:MultiplicityOfVertInDMin}
Notice that $v_i(D_{\min}) = d_i - m_i$ if $i$ is odd and $v_i(D_{\min}) = d_i - m_{i-1}$ if $i$ is even. 
\end{remark}

 \begin{proposition}\label{prop:uniquemin}
Let $D \in \mathcal{D}_{\mathbf{d}}$ with $\mathbf{d}\in\mathbb{Z}^n_{\ge 0}$. There exists a sequence of twists from $D_{\min}$ to $D$ consisting solely of horizontal twists at odd tiles and vertical twists at even tiles. Consequently, $D_{min}$ is the unique minimal element of $(\mathcal{D}_{\mathbf{d}},\preceq_T)$.
 \end{proposition}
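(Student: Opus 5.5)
Since the twists act only on the horizontal multiplicities, I will work through $\Theta$ and Lemma~\ref{lem:HoriztonalDeterminesMatching}. By that lemma, every element of $\mathcal{D}_{\mathbf{d}}$ is determined by its tuple $(h_1,\ldots,h_{n-1})$. A horizontal twist at odd tile $i$ lowers $h_i$ by one. A vertical twist at even tile $i$ raises $h_i$ by one. No other $h_j$ changes in either case.

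Fix $D$ and compare it with $D_{\min}$. For odd $i$, Lemma~\ref{lem:VsHsDs} gives $h_i(D)\le m_i=h_i(D_{\min})$. For even $i$, we have $h_i(D)\ge 0=h_i(D_{\min})$. So odd coordinates only ever need to decrease and even coordinates only ever need to increase, which is exactly what the allowed twists do.

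I will build the sequence in two phases.

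\textbf{Phase 1.} Starting at $D_{\min}$, for each odd $i$ perform $m_i-h_i(D)$ horizontal twists at tile $i$. This reaches the cover $D^{(1)}$ whose tuple has $h_i(D)$ in odd positions and $0$ in even positions. Each intermediate tuple is coordinatewise below $(m_1,0,m_3,0,\ldots)$, so it satisfies the inequalities of Lemma~\ref{lem:HoriztonalDeterminesMatching}. The required condition $h_i>0$ before each twist holds because we stop at $h_i(D)\ge 0$.

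\textbf{Phase 2.} For each even $i$, perform $h_i(D)$ vertical twists at tile $i$, raising $h_i$ from $0$ to $h_i(D)$. This is the step that needs care: a vertical twist at tile $i$ requires $v_i>0$ and $v_{i+1}>0$, and by Lemma~\ref{lem:VsHsDs} this is equivalent to $h_{i-1}+h_i<d_i$ and $h_i+h_{i+1}<d_{i+1}$, with the boundary conventions $h_0=h_n=0$. During Phase 2 the odd neighbours are already fixed at $h_{i\pm1}(D)$, and the current value of $h_i$ is strictly less than $h_i(D)$. Since $D$ is valid, $h_{i-1}(D)+h_i(D)\le d_i$ and $h_i(D)+h_{i+1}(D)\le d_{i+1}$, so both strict inequalities hold. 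Distinct even tiles never share a neighbour coordinate that changes in this phase, so the order of the twists does not matter. The twist lemma then guarantees each step stays in $\mathcal{D}_{\mathbf{d}}$, and the process ends at $D$.

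This shows $D_{\min}\preceq_T D$ for every $D$. A minimal element $M$ therefore satisfies $D_{\min}\preceq_T M$, so $M=D_{\min}$ by minimality, and $D_{\min}$ is the unique minimal element (indeed the minimum).

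The main obstacle is the ordering of the phases. Performing the odd decreases first is essential: it frees up the vertical edges that the even-tile vertical twists need.
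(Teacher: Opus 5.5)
Your proof is correct and is essentially the paper's argument run forwards. The paper inducts on $S(D)=\sum_i a_i(D)$ and descends from $D$ by first applying horizontal twists at even tiles and then vertical twists at odd tiles; when reversed, this is exactly your Phase~1 (odd tiles lowered) followed by Phase~2 (even tiles raised), except that you check the even-tile vertical twists directly from the inequalities satisfied by $D$, whereas the paper checks the reverse odd-tile vertical twists using that the even neighbours are zero and $h_j(D)<m_j$.
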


\begin{proof}
Define $a_i(D)$ by \begin{equation}\label{eqn:min}
a_i(D) = \begin{cases}
m_i - h_i(D) & i \text{ odd}\\
h_i(D) & i \text{ even.}
\end{cases}
\end{equation}
We proceed via induction on $S(D) := \sum_{i=1}^{n-1}a_i(D)$. 

If $a_i(D) = 0$, then $h_i(D) = h_i(D_{\min})$ for all $1 \leq i \leq n-1$, and   by Lemma \ref{lem:HoriztonalDeterminesMatching}, $S(D) = 0$ if and only if $D = D_{\min}$. Here, the statement is clear.

Now, assume $S(D)>0$, implying $D \neq D_{\min}$. Suppose first that there exists an even number $i$ such that $a_i(D) > 0$. By Equation (\ref{eqn:min}), this implies that $h_i(D)$ is positive. Let $D'$ be the result of performing a horizontal twist on $D$ at tile $i$. We have $S(D') < S(D)$. By the inductive hypothesis, there is a sequence of twists from $D_{\min}$ to $D'$ using horizontal twists at odd tiles and vertical twists at even tiles. Appending a final vertical twist at tile $i$ to this sequence yields the desired sequence from $D_{\min}$ to $D$. 

Now, suppose that $S(D)>0$ and the only values $i$ such that $a_i(D) > 0$ are odd.  Let $j$ be the smallest such index. By our assumption, $a_{j-1}(D) = a_{j+1}(D) = 0$. Then, by Equation (\ref{eqn:min}), we have $h_{j-1}(D) = h_{j+1}(D) = 0$ and $h_j(D) < m_j = \min(d_j,d_{j+1})$. Therefore, \[
h_{j-1}(D) + h_j(D) < d_j \text{ and } h_j(D) + h_{j+1}(D) < d_{j+1},
\]
and from Lemma \ref{lem:VsHsDs}, we know that $v_j(D)$ and $v_{j+1}(D)$ are both positive. Now, let $D'$ be the result of performing a vertical twist at tile $j$. We again see that $S(D') < S(D)$, and, applying similar reasoning to that used previously, we can recover a sequence of twists of the desired form from $D_{\min}$ to $D$. 
\end{proof}

\begin{example}
Let $\mathbf{d} = (3,2,6,4,1,3)$, and let $D\in \mathcal{D}_{\mathbf{d}}$ be the following mixed dimer cover.
\begin{center}
\begin{tikzpicture}
\draw(0,0) to node[below]{1} (1,0) to  node[below]{1} (2,0)to  node[below]{1} (3,0) to  node[below]{1} (4,0) to node[below]{0}(5,0) to node[right]{3}(5,1) to node[above]{0} (4,1) to node[above]{1} (3,1) to node[above]{1} (2,1) to node[above]{1} (1,1) to node[above]{1} (0,1) to node[left]{2}(0,0);
\draw (1,0) to node[right]{0} (1,1);
\draw (2,0) to node[right]{4} (2,1);
\draw (3,0) to node[right]{2} (3,1);
\draw (4,0) to node[right]{0} (4,1);
\end{tikzpicture}
\end{center}
Recall, for this vector $\mathbf{d} = (3,2,6,4,1,3)$, $D_{\min}$ is provided in Example \ref{ex:Dmin}. Our sequence of values $a_i(D)$ is $(1,1,3,1,1)$. The proof of \Cref{prop:uniquemin} can be used to define a sequence of twists from $D$ to $D_{\min}$ using horizontal twists at even tiles and vertical twists at odd tiles. First, we maximally twist horizontally on even tiles. Here, this means performing one horizontal twist at tiles 2 and 4. The resulting dimer, $D'$, is below. 

\begin{center}
\begin{tikzpicture}
\draw(0,0) to node[below]{1} (1,0) to  node[below]{0} (2,0)to  node[below]{1} (3,0) to  node[below]{0} (4,0) to node[below]{0}(5,0) to node[right]{3}(5,1) to node[above]{0} (4,1) to node[above]{0} (3,1) to node[above]{1} (2,1) to node[above]{0} (1,1) to node[above]{1} (0,1) to node[left]{2}(0,0);
\draw (1,0) to node[right]{1} (1,1);
\draw (2,0) to node[right]{5} (2,1);
\draw (3,0) to node[right]{3} (3,1);
\draw (4,0) to node[right]{1} (4,1);
\end{tikzpicture}
\end{center}

Now, $D'$ agrees with $D_{\min}$ on all horizontal edges along even tiles, so that $a_i(D') =0 $ for all even $i$. Maximally twisting $D'$ vertically on odd tiles produces $D_{\min}$. Explicitly, we twist tile 1 one time, tile 3 three times, and tile 5 one time. 
\end{example}

\begin{remark}
 In light of Proposition \ref{prop:uniquemin} and the proof method, we see that there is also a unique maximal element in $(\mathcal{D}_{\mathbf{d}},\preceq_T)$. This is the element $D$ of $\mathcal{D}_{\mathbf{d}}$ with $h_i(D) = 0$ if $i$ is odd and $h_i(D) = m_i$ if $i$ is even. We can recognize this as the result of reversing the roles of even and odd in Definition \ref{def:Dmin}.
\end{remark}

\subsection{Distributive Lattices}

Given a vector $\mathbf{d} \in \mathbb{Z}_{\geq 0}^n$, we build a poset $(P_{\mathbf{d}},\preceq_L)$ as follows. The underlying set $P_{\mathbf{d}}$ is \[
\{(i,j)~:~1 \leq i \leq n-1,~1 \leq j \leq m_i\}
\]
and the partial order is given by taking the transitive closure of the relations
\begin{align*}
    (i,j)&\preceq_L(i,j+1) \text{ for all }1 \leq i \leq n-1,~1 \leq j \leq m_i-1,\\
    (i-1,j)&\preceq_L(i,j+d_i-m_{i-1}) \text{ for all even }i \text{ and all } 1 \leq j \leq m_i\text{, and}\\
    (i+1,j)&\preceq_L(i,j+d_{i+1}-m_{i+1}) \text{ for all even }i \text{ and all } 1 \leq j \leq m_i
\end{align*}
where in each case, there is no relation if one of the pairs is not an element of $P_{\mathbf{d}}$, i.e., an element of the form $(i,j)$ with $j > m_i$.

\begin{example}
For $\mathbf{d} = (3,2,6,4,1,3)$, the poset $(P_{\mathbf{d}},\preceq_L)$ is illustrated in Figure~\ref{fig:Pd}. 

\begin{figure}[h]
    \centering
    \begin{tikzpicture}
\node(11) at (0,0){$(1,1)$};
\node(12) at (0,1){$(1,2)$};
\node(21) at (2,1){$(2,1)$};
\node(22) at (2,2){$(2,2)$};
\node(31) at (4,0){$(3,1)$};
\node(32) at (4,1){$(3,2)$};
\node(33) at (4,2){$(3,3)$};
\node(34) at (4,3){$(3,4)$};
\node(41) at (6,1){$(4,1)$};
\node(51) at (8,0){$(5,1)$};
\draw(11) -- (12);
\draw(21) -- (22);
\draw(31) -- (32);
\draw(32) -- (33);
\draw(33) -- (34);
\draw(11) -- (21);
\draw(12) -- (22);
\draw(41) -- (31);
\draw(41) -- (51);
\end{tikzpicture}
    \caption{$P_{\mathbf{d}}$ for $\mathbf{d} = (3,2,6,4,1,3)$}
    \label{fig:Pd}
\end{figure}
\end{example}

The posets $(P_{\mathbf{d}},\preceq_L)$ generalize a cross product of a zig-zag poset and a chain\footnote{The distributive lattices arising from the cross product of a zig-zag poset and a chain---in particular the sizes thereof---were studied in \cite{berman76}.}, where here the cover relations between adjacent copies of the chain have an offset.  We will refer to these as \emph{columns}; that is, the columns are the subposets of $(P_{\mathbf{d}},\preceq_L)$ on the collections $\{(i,j)~|~1\le j\le m_i\}$ for a fixed $1\le i\le n-1$. From Remark \ref{rmk:MultiplicityOfVertInDMin}, we see that the offset can also be read from the multiplicity of vertical edges in $D_{\min}$.
The posets $(P_{\mathbf{d}},\preceq_L)$ also resemble the $\Gamma$ posets in \cite{dilks2019rowmotion}.

\begin{lemma}\label{lem:CanDefineMapPsi}
Let $D \in \mathcal{D}_{\mathbf{d}}$ with $\mathbf{d}\in\mathbb{Z}_{\ge 0}^n$. Let $\Psi(D)$ be the subset of $P_{\mathbf{d}}$ of the form \[
\Psi(D):=\bigg(\bigcup_{i \text{ odd}}\{(i,1), (i,2) \ldots, (i,m_i - h_i(D)\}\bigg) \cup \bigg( \bigcup_{i \text{ even}}\{(i,1),(i,2),\ldots,(i,h_i(D))\}\bigg).
\]
The set $\Psi(D)$ forms an order ideal in $(P_{\mathbf{d}},\preceq_L)$.
\end{lemma}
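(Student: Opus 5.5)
Since $\preceq_L$ is the transitive closure of the three families of generating relations, $\Psi(D)$ is an order ideal as soon as it is closed downward under each generating relation. That is, whenever $x\preceq_L y$ is one of the listed relations and $y\in\Psi(D)$, we need $x\in\Psi(D)$. Write $c_i(D)$ for the number of elements of column $i$ lying in $\Psi(D)$: $c_i=m_i-h_i(D)$ for $i$ odd and $c_i=h_i(D)$ for $i$ even. We first check $0\le c_i\le m_i$, which holds because $h_i(D)\le m_i$ by Lemma~\ref{lem:HoriztonalDeterminesMatching}. Indeed, $h_i\le h_{i-1}+h_i\le d_i$ and $h_i\le h_i+h_{i+1}\le d_{i+1}$, with the boundary inequalities $h_1\le d_1$ and $h_{n-1}\le d_n$ covering the ends. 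So $\Psi(D)\cap\{\text{column } i\}$ is exactly the initial segment $\{(i,1),\dots,(i,c_i)\}$.

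The relations of the first type, $(i,j)\preceq_L(i,j+1)$, are then handled immediately, since each column meets $\Psi(D)$ in an initial segment.

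For the second type, take $i$ even, so $i-1$ is odd. Suppose $(i,j+d_i-m_{i-1})\in\Psi(D)$ and $(i-1,j)\in P_{\mathbf d}$. Membership means $j+d_i-m_{i-1}\le h_i(D)$, and we must show $j\le m_{i-1}-h_{i-1}(D)$. Since $h_{i-1}+h_i\le d_i$ (for $i\le n-1$ and $i-1\ge1$ this is the middle inequality of Lemma~\ref{lem:HoriztonalDeterminesMatching}), we get
\[
j\le h_i-d_i+m_{i-1}\le m_{i-1}-h_{i-1},
\]
as needed.

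The third type, with $i$ even and $i+1$ odd, is symmetric. Suppose $(i,j+d_{i+1}-m_{i+1})\in\Psi(D)$, i.e.\ $j+d_{i+1}-m_{i+1}\le h_i$, and $(i+1,j)\in P_{\mathbf d}$ (so $i+1\le n-1$). The inequality $h_i+h_{i+1}\le d_{i+1}$ gives
\[
j\le h_i-d_{i+1}+m_{i+1}\le m_{i+1}-h_{i+1},
\]
so $(i+1,j)\in\Psi(D)$.

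The only real obstacle is bookkeeping: we must make sure the correct inequality from Lemma~\ref{lem:HoriztonalDeterminesMatching} applies in each case, check the index ranges at the ends, and respect the convention that relations involving nonexistent elements are omitted. Once membership is written as an inequality on $h_i(D)$, each case is a one-line consequence of the vertex constraint at the shared vertex column ($i$ for the second type, $i+1$ for the third).
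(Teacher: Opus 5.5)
Your proof is correct and follows essentially the same route as the paper: reduce to the generating relations, note that each column meets $\Psi(D)$ in an initial segment, and derive the cross-column relations from $h_{i-1}(D)+h_i(D)\le d_i$ and $h_i(D)+h_{i+1}(D)\le d_{i+1}$ in Lemma~\ref{lem:HoriztonalDeterminesMatching}. The paper checks only the elements covered by the top element of each even column, whereas you check every $j$ directly. You also verify explicitly that $h_i(D)\le m_i$, so that $\Psi(D)$ really is a subset of $P_{\mathbf d}$, a detail the paper leaves implicit.
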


\begin{proof}
Based on the nature of the relations generating the partial order on $P_{\mathbf{d}}$, it suffices to check that, for
$(i,j)\in\Psi(D)$ with $i$ even, the elements it covers
in adjacent odd-indexed columns, namely $(i-1,j-d_{i}+m_{i-1})$
and $(i+1,j-d_{i+1}+m_{i+1})$, are also in $\Psi(D)$.
More specifically, we can check for the elements covered by
the top element $(i,h_i(D))$ in the even-indexed column:
$(i-1,h_i(D) - d_{i} + m_{i-1})$ and $(i+1,h_i(D) - d_{i+1} + m_{i+1})$. 

By definition of the map $\Psi(D)$, the largest element in column $i+1$ is $(i+1,m_{i+1}-h_{i+1}(D))$. So checking whether $(i+1,h_i(D) - d_{i+1} + m_{i+1}) \in \Psi(D)$ is equivalent to checking \[
h_i(D) - d_{i+1} + m_{i+1} \leq m_{i+1} - h_{i+1}(D).
\]
This inequality is equivalent to $h_i(D) + h_{i+1}(D) \leq d_{i+1}$, which is guaranteed by Lemma \ref{lem:HoriztonalDeterminesMatching}.

The case for $(i-1,h_i(D) - d_{i} + m_{i-1})$ is similar.
 \end{proof}

 In light of Lemma \ref{lem:CanDefineMapPsi}, we can realize $\Psi$ as a map from $\mathcal{D}_{\mathbf{d}}$ to $\mathcal{J}(P_{\mathbf{d}})$. We claim that this in fact defines a poset isomorphism.

 \begin{theorem}\label{thm:dlattice}
The map $\Psi$ is an isomorphism of posets. In particular, $(\mathcal{D}_{\mathbf{d}},\leqt)$ with $\mathbf{d}\in\mathbb{Z}_{\ge 0}^n$ forms a distributive lattice.
 \end{theorem}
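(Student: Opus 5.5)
The plan is to show that $\Psi$ is a bijection onto $\mathcal{J}(P_{\mathbf{d}})$ and that both $\Psi$ and $\Psi^{-1}$ send cover relations to cover relations. The distributive lattice statement then follows from Theorem \ref{thm:FTFDL}.

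\textbf{Injectivity.} The column-$i$ part of $\Psi(D)$ is an initial segment of the chain $(i,1)\prec_L\cdots\prec_L(i,m_i)$. Its length is $m_i-h_i(D)$ for odd $i$ and $h_i(D)$ for even $i$. So $\Psi(D)$ recovers the tuple $(h_i(D))_i$, and Lemma \ref{lem:HoriztonalDeterminesMatching} then gives injectivity.

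\textbf{Surjectivity.} Let $I$ be an order ideal. Since the columns are chains, $I\cap\{(i,j)\}_j$ is an initial segment of some length $\ell_i\in[0,m_i]$. Define $h_i=m_i-\ell_i$ for odd $i$ and $h_i=\ell_i$ for even $i$. We need the inequalities of Lemma \ref{lem:HoriztonalDeterminesMatching}, namely $h_i+h_{i+1}\le d_{i+1}$ and the boundary bounds $h_1\le d_1$, $h_{n-1}\le d_n$. The boundary bounds are automatic because $h_i\le m_i$. For an adjacent pair with $i$ even, the argument of Lemma \ref{lem:CanDefineMapPsi} runs in reverse. If $\ell_i=0$, the inequality reads $h_{i\pm1}\le d_{i+1}$ (respectively $d_i$), which holds since $h_{i\pm1}\le m_{i\pm1}$. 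If $\ell_i>0$, then $(i,\ell_i)\in I$. Suppose $\ell_i-d_{i+1}+m_{i+1}\ge 1$. Then the generating relation $(i+1,\ell_i-d_{i+1}+m_{i+1})\preceq_L(i,\ell_i)$ forces $\ell_i-d_{i+1}+m_{i+1}\le\ell_{i+1}=m_{i+1}-h_{i+1}$, which is exactly $h_i+h_{i+1}\le d_{i+1}$. Otherwise $\ell_i\le d_{i+1}-m_{i+1}$, and the inequality holds because $h_{i+1}\le m_{i+1}$. The pair $(i-1,i)$ is symmetric. Hence $I=\Psi(D)$ for the cover $D$ built from these $h_i$.

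\textbf{Order.} A horizontal twist at an odd tile $i$ lowers $h_i$ by one, and a vertical twist at an even tile $i$ raises $h_i$ by one. In both cases $\ell_i$ increases by one and every other column is unchanged. So $D\lessdott D'$ implies $\Psi(D')=\Psi(D)\cup\{x\}$ for a single element $x$, hence $\Psi(D)\subseteq\Psi(D')$, and $\Psi$ is order-preserving.

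For the converse, it suffices to treat cover relations of $\mathcal{J}(P_{\mathbf{d}})$, which have the form $I\subset I\cup\{x\}$. Write $x=(i,\ell_i+1)$, $I=\Psi(D)$ and $I\cup\{x\}=\Psi(D')$. Then $D'$ differs from $D$ only in $h_i$. It is a decrease when $i$ is odd, i.e.\ a horizontal twist, which needs $h_i(D)>0$; this holds since $\ell_i<m_i$. It is an increase when $i$ is even, i.e.\ a vertical twist. That the vertical twist is legal, with $v_i,v_{i+1}>0$, follows from $D'\in\mathcal{D}_{\mathbf{d}}$ together with Lemma \ref{lem:VsHsDs}: the $v$-values of $D'$ are those of $D$ decreased by one and are nonnegative. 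Thus $D\lessdott D'$. Since every inclusion of order ideals factors as a chain of single-element additions, $\Psi^{-1}$ is order-preserving.

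I expect the main obstacle to be the surjectivity step. It requires careful bookkeeping of the offsets $d_i-m_{i-1}$ and $d_{i+1}-m_{i+1}$, including the cases where the shifted index falls outside $[1,m_{i\pm1}]$ so that no relation is imposed. I would handle it by the case split above on whether the shifted index is at least $1$.
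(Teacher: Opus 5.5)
Your proof is correct and follows essentially the same route as the paper. Injectivity and surjectivity come from translating the column lengths of an order ideal into the inequalities of Lemma \ref{lem:HoriztonalDeterminesMatching}. Order preservation in both directions comes from matching twists at odd and even tiles with single-element additions to order ideals, with legality of the vertical twist read off from nonnegativity of the $v$-values of $D'$.

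Your explicit case split on whether the shifted index $\ell_i-d_{i+1}+m_{i+1}$ is at least $1$, and on $\ell_i=0$, only makes precise a point the paper's surjectivity argument passes over silently.
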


 \begin{proof}
Notice the second statement will follow from the first via Theorem~\ref{thm:FTFDL}.

Since the quantities $h_i(D)$ determine an element $D \in \mathcal{D}_{\mathbf{d}}$, the same can be said about the quantities $m_i-h_i(D)$ for $i$ odd and $h_i(D)$ for $i$ even. In particular, $\Psi$ is injective. Next, consider an order ideal $I$ of $(P_{\mathbf{d}},\preceq_L)$ such that the largest element in column $i$ is $(i,\ell_i)$. Note that for $i$ even, $$(i-1,\ell_i-d_i+m_{i-1})\preceq_L(i,(\ell_i-d_i+m_{i-1})+d_i-m_{i-1})=(i,\ell_i)$$ and $$(i+1,\ell_i-d_{i+1}+m_{i+1})\preceq_L(i,(\ell_i-d_{i+1}+m_{i+1})+d_{i+1}-m_{i+1})=(i,\ell_i).$$ Thus, since $I$ is an order ideal, for $i$ even,
\begin{equation}
\ell_{i-1}\geq \ell_{i} - d_i + m_{i-1}\qquad\text{and}\qquad\ell_{i+1} \geq \ell_{i} - d_{i+1} + m_{i+1}.\label{eq:OrderIdealinPd}
\end{equation}
Now, define a sequence $(h_1,\ldots,h_{n-1}) \in \mathbb{Z}_{\ge 0}^{n-1}$ such that $h_i = \ell_i$ for all even $i$, and $h_i = m_i - \ell_i$ for all odd $i$. If we rewrite the first part of Equation (\ref{eq:OrderIdealinPd}) in terms of $h_i$, we have, for even $i$, \[
m_{i-1} - h_{i-1} \geq h_i - d_i + m_{i-1}
\]
which is equivalent to $d_i \geq h_{i-1} + h_i$. A similar calculation can be carried out for $i+1$, yielding $d_i \geq h_{i+1} + h_i$. Finally, we naturally have $h_1 \leq m_1$ and $h_{n-1} \leq m_{n-1}$. Applying Lemma \ref{lem:HoriztonalDeterminesMatching}, there exists $D \in \mathcal{D}_{\mathbf{d}}$ such that $h_i = h_i(D)$ for all $i$, and, by construction, $\Psi(D) = I$.

We have at this point shown that $\Psi$ is bijective. It remains to check that it and its inverse preserve order. A cover relation in $(\mathcal{D}_{\mathbf{d}},\preceq_T)$ consists of a horizontal twist at an odd tile or a vertical twist at an even tile. If $D'$ is the result of a horizontal twist of $D$ at tile $i$, where $i$ is odd, then $h_i(D') = h_i(D) - 1$ and $h_j(D') = h_j(D)$ for all $j \neq i$. From the definition of $\Psi$ and since $i$ is odd, we see $\Psi(D') \supset \Psi(D)$. A similar argument shows the same if $D'$ is the result of a vertical twist at an even tile. In particular, $\Psi$ respects the cover relations, and therefore respects all relations. 

{Now, we check that $\Psi^{-1}$ is order-preserving. Take $I',I\in\mathcal{J}(P_\mathbf{d})$ where $I'$ covers $I$. Then $I\subset I'$ and $|I'\backslash I|=1$. Again let $\ell_i$ be such that  the largest element of $I$ in column $i$ is $(i,\ell_i)$. If no such element exists, let $\ell_i =0$. There exists $j$ such that, for all $i \neq j$, the largest element in column $i$ of $I'$ is $(i,\ell_i)$ whereas the largest element of $I'$ in column $j$ is $(j,\ell_j+1)$. There are two cases to consider depending on whether $j$ is even or odd.
\bigskip

\noindent
\textbf{Case 1:} $j$ even. In this case, if $\Psi^{-1}(I)=D$ and $\Psi^{-1}(I')=D'$, then $h_i(D)=h_i(D')$ for $i\neq j$ and $h_j(D)+1=h_j(D')$. Consequently, we have \[
h_{j-1}(D)+h_j(D)+1\le d_j=h_{j-1}(D)+h_j(D)+v_j(D)
\]
and
\[
h_{j+1}(D)+h_j(D)+1\le d_{j+1}=h_{j+1}(D)+h_j(D)+v_{j+1}(D)
\] 
where we set $h_{j+1}(D) = 0$ if $j = n-1$. This computation implies that  $v_j(D),v_{j+1}(D)>0$ which moreover means one can perform a vertical twist at tile $j$ of $D$. This twist yields $D'$, and by definition $D'$ covers $D$.
\bigskip

\noindent
\textbf{Case 2:} $j$ odd. In this case, if $\Psi^{-1}(I)=D$ and $\Psi^{-1}(I')=D'$, then $h_i(D)=h_i(D')$ for $i\neq j$ and $h_j(D)-1=h_j(D')$. Thus, $h_j(D)>0$ and one can perform a horizontal twist at tile $j$ of $D$ which results in $D'$ so that $D'$ covers $D$.} 
 \end{proof}

\section{A (partially) order-preserving bijection}\label{sec:opbij}

In this section, we provide a second bijection $\Phi$ between mixed dimer covers and rank tuples that better preserves the poset structure of each.  Let $\Phi(D)$ be the tuple $\Phi(D)=(r_1(D),\ldots,r_{n-1}(D))$ where \begin{align}
    r_i(D)=\begin{cases}
        h_i(D) & \text{for}~i~\text{even, and}\\
        \min(v_i(D),v_{i+1}(D)) & \text{for}~i~\text{odd.}
    \end{cases}.\label{eq:def_of_Phi}
\end{align}
Note that the tuple $\Phi(D)$ indeed satisfies the necessary inequalities for membership in $\mathcal{R}_\bfd$: $r_{i}+r_{i+1}\leq d_{i+1}$ for all $1\leq i\leq n-2$ and $r_i\leq m_i = \min(d_{i},d_{i+1})$ for all $2\leq i\leq n-1$.  

\begin{lemma}\label{lem:moving_up}
    Let $\mathbf{d}=(d_1,\hdots,d_n)$ and $\mathbf{r}=(r_1,r_2,\ldots,r_{n-1})\in \mathcal{R}_{\mathbf{d}}$. If \[\mathbf{r}'=(r_1,r_2,\ldots,r_{i-1},r_i+1,r_{i+1},\ldots,r_{n-1})\in \mathcal{R}_{\mathbf{d}}\] and $\mathbf{r}=\Phi(D)$ for some $D\in \mathcal{D}_{\mathbf{d}}$, then there exists $D'\in\mathcal{D}_{\mathbf{d}}$ such that $\Phi(D')=\mathbf{r}'$ and $D'\geqt D$.
\end{lemma}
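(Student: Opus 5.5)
We split according to the parity of $i$ and use only covering moves in $(\mathcal{D}_{\mathbf{d}},\preceq_T)$: horizontal twists at odd tiles and vertical twists at even tiles. Each such move goes up in $\preceq_T$ by Definition~\ref{def:mdorder}, so the relation $D'\geqt D$ is automatic. The remaining work is to track how each twist changes the entries of $\Phi$ in \eqref{eq:def_of_Phi}.

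A horizontal or vertical twist at tile $j$ changes only $h_j$, $v_j$ and $v_{j+1}$. Since $r_k=h_k$ for even $k$, and $r_k=\min(v_k,v_{k+1})$ for odd $k$, a twist at tile $j$ can affect only $r_j$ and the odd-indexed entries whose windows $\{v_k,v_{k+1}\}$ contain $v_j$ or $v_{j+1}$. We will use the conventions $h_0=h_n=0$ and $r_0=r_n=0$ throughout.

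\textbf{Case $i$ odd.} We perform a single horizontal twist at tile $i$. This leaves every $h_k$ with $k\neq i$ unchanged, so all even-indexed entries are fixed. It raises both $v_i$ and $v_{i+1}$ by one. These two values appear together only in the window of $r_i$, because the neighbouring odd entries $r_{i\pm2}$ use $v_{i-2},v_{i-1}$ and $v_{i+2},v_{i+3}$. Hence $r_i$ rises by exactly one and nothing else changes.

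The only thing to verify is that $h_i(D)>0$. Suppose instead $h_i(D)=0$. By Lemma~\ref{lem:VsHsDs}, $v_i=d_i-h_{i-1}=d_i-r_{i-1}$ and $v_{i+1}=d_{i+1}-r_{i+1}$, where $i\pm1$ are even so $h_{i\pm1}=r_{i\pm1}$; at the ends this reads $v_1=d_1$ and $v_n=d_n$. Then $r_i=\min(d_i-r_{i-1},\,d_{i+1}-r_{i+1})$. Consequently $r_i+1$ violates one of the inequalities \eqref{eq:rankconditions}, namely $r_{i-1}+r_i\le d_i$, $r_i+r_{i+1}\le d_{i+1}$, or $r_i\le m_i$ at the boundary. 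This contradicts $\mathbf{r}'\in\mathcal{R}_{\mathbf{d}}$.

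\textbf{Case $i$ even.} Here $r_i=h_i$, so we must eventually make one vertical twist at tile $i$. That twist lowers $v_i$ and $v_{i+1}$, and so may lower $r_{i-1}=\min(v_{i-1},v_i)$ or $r_{i+1}=\min(v_{i+1},v_{i+2})$. We compensate on each side separately before twisting.

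On the left side, if $v_i\le v_{i-1}$, we first make a horizontal twist at the odd tile $i-1$. This raises $v_{i-1}$ and $v_i$ by one. After the vertical twist at $i$ the net effect is that $v_{i-1}$ has gone up by one and $v_i$ is unchanged. Since $v_i\le v_{i-1}$, the minimum $r_{i-1}$ is unchanged. If instead $v_i>v_{i-1}$, no preparation is needed, because $v_i-1\ge v_{i-1}$ already keeps the minimum. The right side is handled symmetrically with tile $i+1$ and the comparison $v_{i+1}\le v_{i+2}$. None of these preparatory twists changes any $h_k$ with $k$ even.

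\textbf{Main obstacle.} The hard step is to justify that the needed moves are legal, namely that $h_{i-1}>0$ whenever we twist at $i-1$ (likewise $h_{i+1}>0$), and that $v_i,v_{i+1}>0$ at the moment of the vertical twist. For the first, suppose $v_i\le v_{i-1}$ but $h_{i-1}=0$. Then $v_i=d_i-h_i=d_i-r_i$ and $r_{i-1}=v_i$, so $r_{i-1}+r_i=d_i$. This contradicts $r_{i-1}+r_i+1\le d_i$, which holds because $\mathbf{r}'\in\mathcal{R}_{\mathbf{d}}$. For the second, positivity of $v_i$ follows either from the preparatory twist, which makes $v_i\ge1$, or from $v_i>v_{i-1}\ge0$; the same holds for $v_{i+1}$.

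At the boundary $i=n-1$, there is no $r_{i+1}$ to protect. There we only need $v_n=d_n-r_{n-1}>0$, which holds because $r_{n-1}+1\le m_{n-1}\le d_n$. The resulting $D'$ therefore satisfies $\Phi(D')=\mathbf{r}'$ and $D'\geqt D$.
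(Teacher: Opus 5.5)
Your proposal is correct and follows essentially the same route as the paper. For $i$ odd you use a single horizontal twist at tile $i$; for $i$ even you use conditional horizontal twists at tiles $i-1$ and $i+1$, triggered by exactly the comparisons $v_i\le v_{i-1}$ and $v_{i+1}\le v_{i+2}$, followed by a vertical twist at tile $i$, with every move's legality derived from the strict rank inequalities for $\mathbf{r}'$. Your contradiction argument for $h_i(D)>0$ in the odd case and your explicit handling of the boundary $i=n-1$ are only cosmetic variations on the paper's proof.
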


\begin{proof}
We are concerned with the piece of the $\bfd$-dimer cover $D$ consisting of tiles $i-1$, $i$, and $i+1$
from the left, which we illustrate in the following diagram.

\[\begin{tikzpicture}[scale=1]
  \draw (0,0) rectangle (2,2);
  \draw (2,0) rectangle (4,2);
  \draw (4,0) rectangle (6,2);

  \node at (1,2) [above] {$h_{i-1}(D)$};
  \node at (3,2) [above] {$h_i(D)$};
  \node at (5,2) [above] {$h_{i+1}(D)$};
  \node at (0,1) [left]  {$v_{i-1}(D)$};
  \node at (2,1) [left] {$v_i(D)$};
  \node at (4,1) [left] {$v_{i+1}(D)$};
  \node at (6,1) [left] {$v_{i+2}(D)$};
  
  \node at (-1.5,1) [left] {$\cdots$};
  \node at (7,1) [left] {$\cdots$};
\end{tikzpicture}\]

Since $\mathbf{r}' \in \mathcal{R}_{\mathbf{d}}$, we must have strict inequalities $r_i < m_i$, $r_{i-1} + r_i < d_i$ and $r_i + r_{i+1} < d_{i+1}$.

In order to analyze further, we split into two cases based on the parity of $i$. Throughout the proof, set $h_0(D) = h_{n-1}(D) = 0$ and $v_0(D) = v_{n}(D) =0$.
First assume that $i$ is odd. Then $r_{i-1} = h_{i-1}(D)$ and $r_{i+1} = h_{i+1}(D)$ for $D$ such that $\Phi(D) = \mathbf{r}$. In this case, either
\begin{enumerate}
    \item[(i)] $\min(v_i(D),v_{i+1}(D))=v_i(D)$, in which case we consider
    \begin{align*}
        r_{i-1}'+r_i'=r_{i-1}+(r_i+1)&\le d_i,\\
        \intertext{and by definition of $\Phi$ and Lemma \ref{lem:VsHsDs}, we replace the left and right sides respectively to obtain}
        h_{i-1}(D)+v_i(D)+1 &\le h_{i-1}(D)+v_i(D)+h_i(D),
    \end{align*}
    so that $h_i(D) > 0$, or
    \item[(ii)] $\min(v_i(D),v_{i+1}(D))=v_{i+1}(D)$, in which case we consider
    \[r_i'+r_{i+1}'\leq d_{i+1}\]
    and similarly find that $h_i(D)>0$.
\end{enumerate}

Thus, in either case $h_i(D)$ is positive and one can apply a horizontal twist at tile $i$ of $D$ resulting in the desired $\mathbf{d}$-dimer cover $D'$.

Next assume that $i$ is even. In this case, we need to form two intermediate $\bfd$-dimer covers, $D_1$ and $D_2$ on our way to $D'$. The reader may wish to follow along using \Cref{ex:moving_up}. We form $D_1$ as follows. If $v_{i-1}(D)<v_i(D)$, then we simply let $D_1=D$ and note that $v_i(D_1)=v_i(D)>0$. If instead $v_{i-1}(D)\geq v_i(D)$, then we note that \begin{align*}
    h_{i-1}(D)&=d_i-(h_i(D)+v_i(D)) & \text{by Lemma \ref{lem:VsHsDs}}\\
    &=d_i-(r_{i}+r_{i-1}) & \text{by \Cref{eq:def_of_Phi}}\\
    &\geq1 & \text{by $\bfr'\in R_\bfd$.}
\end{align*}
This allows us to apply a horizontal twist at tile $i-1$ of $D$, yielding $D_1\in\mathcal{D}_\bfd$. We make the following observations about $D_1$ that hold in both cases: \begin{enumerate}
    \item[(a1)] $D_1\geqt D$,
    \item[(b1)] $v_i(D_1)>0$, and
    \item[(c1)] $v_i(D_1)-v_i(D)=\delta_{v_{i-1}(D)\geq v_i(D)}$ and similarly for $v_{i-1}$.
\end{enumerate}
where for a statement $S$, we have that $\delta_S$
is the quantity $1$ when $S$ is true and the quantity
$0$ when $S$ is false.

Now, we form $D_2$ from $D_1$ similarly. If $v_{i+2}(D_1)<v_{i+1}(D_1)$, then we simply let $D_2=D_1$ and note that $v_{i+1}(D_2)=v_{i+1}(D_1)>0$. If instead $v_{i+2}(D_1)\geq v_{i+1}(D_1)$, then we note that
\begin{align*}
    h_{i+1}(D_1)=h_{i+1}(D)&=d_i-(h_i(D)+v_{i+1}(D)) & \text{by Lemma \ref{lem:VsHsDs}}\\
    &=d_i-(r_i+r_{i+1})& \text{by \Cref{eq:def_of_Phi}}\\
    &\geq 1 & \text{by $\bfr'\in R_\bfd$.}
\end{align*}
This allows us to apply a horizontal twist at tile $i+1$ of $D_1$, yielding $D_2\in\mathcal{D}_\bfd$. We make the
following observations about $D_2$ that hold in both cases:\begin{enumerate}
    \item[(a2)] $D_2\geqt D_1$,
    \item[(b2)] $v_{i+1}(D_2)>0$, and
\end{enumerate}

Finally, because $v_i(D_2)=v_i(D_1)>0$ and $v_{i+1}(D_2)>0$, we form a $\bfd$-dimer cover $D'$ from $D_2$ by performing a vertical twist at tile $i$. Note that by transitivity, $D'\succ_T D$.
Because we changed values only at squares $i-1$, $i$, and $i+1$, we have that $\Phi(D')$ can only differ from $\bfr=\Phi(D)$ at indices $i-1$, $i$ and $i+1$. Our final twist at tile $i$ ensures that 
\begin{align*}
    \Phi(D')_i&=h_i(D)+1=r_i+1.
\end{align*}
Now, working back through the modifications of each twist, we have that
\begin{align*}
    \Phi(D')_{i-1}&=\min(v_{i-1}(D'),v_{i}(D'))\\
    &=\min(v_{i-1}(D_2),v_i(D_2)-1)\\
    &=\min(v_{i-1}(D_1),v_i(D_1)-1)\\
    &=\min(v_{i-1}(D),v_i(D)-1)+\delta_{v_{i-1}(D)\geq v_i(D)}.
\end{align*}
If $v_{i-1}(D)\geq v_i(D)$, this resolves to $v_{i}(D)-1+1=v_{i}(D)$.
If instead $v_{i-1}(D)< v_i(D)$, this resolves to $v_{i-1}(D)$. Hence, $\Phi(D')_{i-1}=\min(v_{i-1}(D),v_i(D))=r_{i-1}$. Similarly, $\Phi(D')_{i+1}=r_{i+1}$ and $\Phi(D')=\bfr'$ as desired.

We have demonstrated the result for all values of $i$ other than $i=1$ and $i=n$. These two remaining cases are analogous and in fact simpler. \qedhere

\end{proof}

\begin{example}\label{ex:moving_up}
Below we show the process described in \Cref{lem:moving_up} for a particular $\bfd$-dimer cover
with $\bfd=(4,5,7,3)$. At each stage, the most recently twisted cell is
shaded. Note that because $v_4(D_1)<v_3(D_1)$, we have no twist in the subsequent step,
so $D_2=D_1$.

\[\begin{array}{ccc}
    \raisebox{.34in}{$D=$}\begin{tikzpicture}[scale=.5]
      \draw (0,0) rectangle (2,2);
      \draw (2,0) rectangle (4,2);
      \draw (4,0) rectangle (6,2);
      \node at (1,2) [above] {$2$};
      \node at (3,2) [above] {$3$};
      \node at (5,2) [above] {$2$};
      \node at (1,0) [below] {$2$};
      \node at (3,0) [below] {$3$};
      \node at (5,0) [below] {$2$};
      \node at (0,1) [left]  {$2$};
      \node at (2,1) [left] {$0$};
      \node at (4,1) [left] {$2$};
      \node at (6,1) [left] {$1$};
    \end{tikzpicture} &  \raisebox{.34in}{$\longleftrightarrow$} & \raisebox{.34in}{$\Phi(D)=(0,3,1)$}\\
        \raisebox{.34in}{$D_1=$}\begin{tikzpicture}[scale=.5]
      \draw[fill=red!40!white] (0,0) rectangle (2,2);
      \draw (2,0) rectangle (4,2);
      \draw (4,0) rectangle (6,2);
      \node at (1,2) [above] {$1$};
      \node at (3,2) [above] {$3$};
      \node at (5,2) [above] {$2$};
      \node at (1,0) [below] {$1$};
      \node at (3,0) [below] {$3$};
      \node at (5,0) [below] {$2$};
      \node at (0,1) [left]  {$3$};
      \node at (2,1) [left] {$1$};
      \node at (4,1) [left] {$2$};
      \node at (6,1) [left] {$1$};
    \end{tikzpicture} & & \\
        \raisebox{.34in}{$D_2=$}\begin{tikzpicture}[scale=.5]
      \draw (0,0) rectangle (2,2);
      \draw (2,0) rectangle (4,2);
      \draw (4,0) rectangle (6,2);
      \node at (1,2) [above] {$1$};
      \node at (3,2) [above] {$3$};
      \node at (5,2) [above] {$2$};
      \node at (1,0) [below] {$1$};
      \node at (3,0) [below] {$3$};
      \node at (5,0) [below] {$2$};
      \node at (0,1) [left]  {$3$};
      \node at (2,1) [left] {$1$};
      \node at (4,1) [left] {$2$};
      \node at (6,1) [left] {$1$};
    \end{tikzpicture} &  & \\
        \raisebox{.34in}{$D'=$}\begin{tikzpicture}[scale=.5]
      \draw (0,0) rectangle (2,2);
      \draw[fill=red!40!white] (2,0) rectangle (4,2);
      \draw (4,0) rectangle (6,2);
      \node at (1,2) [above] {$1$};
      \node at (3,2) [above] {$4$};
      \node at (5,2) [above] {$2$};
      \node at (1,0) [below] {$1$};
      \node at (3,0) [below] {$4$};
      \node at (5,0) [below] {$2$};
      \node at (0,1) [left]  {$3$};
      \node at (2,1) [left] {$0$};
      \node at (4,1) [left] {$1$};
      \node at (6,1) [left] {$1$};
    \end{tikzpicture} &  \raisebox{.34in}{$\longleftrightarrow$} & \raisebox{.34in}{$\Phi(D')=(0,4,1)$}\\
\end{array}\]
\end{example}

\begin{theorem}
\label{thm:Phiorderpreserving}
    The map $\Phi:\mathcal{D}_{\mathbf{d}}\to \mathcal{R}_{\mathbf{d}}$ is invertible and its inverse is order-preserving. 
\end{theorem}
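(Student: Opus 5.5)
Since $\mathcal{D}_{\mathbf{d}}$ and $\mathcal{R}_{\mathbf{d}}$ are finite sets of the same size by Corollary~\ref{cor:naive_bijection}, invertibility of $\Phi$ reduces to surjectivity (or injectivity). Order preservation of $\Phi^{-1}$ then reduces to the cover relations of $(\mathcal{R}_{\mathbf{d}},\preceq_D)$. These covers are exactly the moves $\mathbf{r}\mapsto\mathbf{r}'$ that increase a single entry by one: if $\mathbf{r}\prec_D\mathbf{s}$, then one can raise entries one at a time while staying in $\mathcal{R}_{\mathbf{d}}$. To see this, decrease the entries of $\mathbf{s}$ toward $\mathbf{r}$ one step at a time. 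The defining inequalities~(\ref{eq:rankconditions}) are all upper bounds, so each such decrease stays in $\mathcal{R}_{\mathbf{d}}$.

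The plan is to exploit Lemma~\ref{lem:moving_up} for both parts simultaneously.

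\textbf{Surjectivity.} Start with $D_{\min}$ and compute $\Phi(D_{\min})$. For $i$ even, $r_i=h_i(D_{\min})=0$. For $i$ odd, $h_{i-1}=h_{i+1}=0$ and $h_i=m_i$, so $v_i=d_i-m_i$ and $v_{i+1}=d_{i+1}-m_i$. One of these is $0$, since $m_i=\min(d_i,d_{i+1})$. Hence $\Phi(D_{\min})=\mathbf{0}$, the minimum of $\mathcal{R}_{\mathbf{d}}$.

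Given any $\mathbf{s}\in\mathcal{R}_{\mathbf{d}}$, choose a chain $\mathbf{0}=\mathbf{r}^{(0)}\precdot_D\mathbf{r}^{(1)}\precdot_D\cdots\precdot_D\mathbf{r}^{(k)}=\mathbf{s}$ of single-entry increments inside $\mathcal{R}_{\mathbf{d}}$. Then apply Lemma~\ref{lem:moving_up} repeatedly. This produces $D_{\min}=D^{(0)}\preceq_T D^{(1)}\preceq_T\cdots\preceq_T D^{(k)}$ with $\Phi(D^{(j)})=\mathbf{r}^{(j)}$. So $\Phi$ is surjective, hence bijective by counting.

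\textbf{Order preservation of $\Phi^{-1}$.} Let $\mathbf{r}\precdot_D\mathbf{r}'$ be a cover and set $D=\Phi^{-1}(\mathbf{r})$. Lemma~\ref{lem:moving_up} supplies some $D'\succeq_T D$ with $\Phi(D')=\mathbf{r}'$. Since $\Phi$ is now known to be injective, $D'=\Phi^{-1}(\mathbf{r}')$, so $\Phi^{-1}(\mathbf{r})\preceq_T\Phi^{-1}(\mathbf{r}')$. Transitivity along chains of covers then gives the general statement.

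\textbf{Main obstacle.} The step needing the most care is the boundary bookkeeping. Lemma~\ref{lem:moving_up} treats $i=1$ and $i=n-1$ only as ``analogous'', so its hypotheses must hold uniformly, including the conventions $h_0=h_n=0$ and the zero vertical edges outside the grid.

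A second point to check is that chains of single-entry increments really connect $\mathbf{0}$ to every $\mathbf{s}$ within $\mathcal{R}_{\mathbf{d}}$. The downward-closure argument above settles this.

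If counting via Corollary~\ref{cor:naive_bijection} felt unsatisfying, injectivity could be shown directly: reconstruct $h_i$ for odd $i$ from $r_{i-1},r_i,r_{i+1}$ via Lemma~\ref{lem:VsHsDs}. The counting route is shorter, however.
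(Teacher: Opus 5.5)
Your proposal is correct and follows essentially the same approach as the paper: you compute $\Phi(D_{\min})=\mathbf{0}$, get surjectivity from repeated use of Lemma~\ref{lem:moving_up}, conclude bijectivity from the equal cardinalities in Corollary~\ref{cor:naive_bijection}, and push chains of covers in $(\mathcal{R}_{\mathbf{d}},\preceq_D)$ through Lemma~\ref{lem:moving_up}. You also fill in two points the paper leaves implicit: you use injectivity to identify the $D'$ produced by the lemma with $\Phi^{-1}(\mathbf{r}')$, and you give the downward-closure argument showing that single-entry chains stay inside $\mathcal{R}_{\mathbf{d}}$.
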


\begin{proof}
    The poset $(\mathcal{D}_\bfd,\leqt)$ has a unique smallest element
    $D_{\min}$. Following the definition of $D_{\min}$ and $\Phi$, we have that $\Phi(D_{\min})=(0,0,\ldots,0)$, the smallest element of
    $(\mathcal{R}_\bfd,\leqd)$. Consequently, repeated application of \Cref{lem:moving_up} shows that the map $\Phi$ is surjective. Considering \Cref{cor:naive_bijection}, which shows the two sets have the same cardinality,
    it follows that $\Phi$ is a bijection.

    Furthermore, if $\bfr,\bfr'\in\mathcal{R}_\bfd$ and $\bfr\leqd \bfr'$, then there is a sequence of cover relations \[\bfr=\bfr^{(1)}\lessdotd\bfr^{(2)}\lessdotd\cdots\lessdotd \bfr^{(k)}=\bfr'.\]

    At each cover relation, we apply \Cref{lem:moving_up} to create a chain of
    inequalities
    \[D=D^{(1)}\leqt D^{(2)}\leqt\cdots\leqt D^{(k)}=D'\] where
    $\bfr=\Phi(D)$ and $\bfr'=\Phi(D')$. Hence $\Phi^{-1}(\bfr)\leqt\Phi^{-1}(\bfr')$,
    meaning the inverse $\Phi^{-1}$ is order-preserving.
\end{proof}

\begin{example}

 Let $\mathbf{d} = (1,1,1,1)$ and consider the sets $\mathcal{R}_{\mathbf{d}}$ and $\mathcal{D}_{\mathbf{d}}$ . Each has five elements and $\Phi^{-1}$ is one bijection between them. The Hasse diagram on the partial order of each is drawn below. We see that these partial orders are not isomorphic.  In particular, $(0,1,0)$ and $(1,0,1)$ are incomparable in $\mathcal{R}_{\mathbf{d}}$ but $\Phi^{-1}((1,0,1)) \prec_T \Phi^{-1}(0,1,0)$.

    \begin{figure}[H]
    \centering
\begin{minipage}{.3\textwidth}
\vspace{21mm}

    $$\begin{tikzpicture}[scale=1]
        \node (1) at (0,0) {(0,0,0)};
        \node (2) at (-2,2) {(1,0,0)};
        \node (3) at (0,2) {(0,1,0)};
        \node (4) at (2,2) {(0,0,1)};
        \node (5) at (0,4) {(1,0,1)};
\draw (1)--(2)--(5)
(1)--(3)
(1)--(4)--(5)
;
    \end{tikzpicture}$$
\end{minipage}\begin{minipage}{.6\textwidth}
       $$\begin{tikzpicture}[scale=.75]
        \node (1) at (0,0) {\begin{tikzpicture}[scale=.65]
\draw(0,0) to node[below]{1} (1,0) to  node[below]{0} (2,0) to  node[right]{0} (2,1) to  node[above]{0} (1,1) to node[above]{1}(0,1) to node[left]{0}(0,0)

(2,0) to  node[below]{1} (3,0) to  node[right]{0} (3,1) to  node[above]{1} (2,1)
(1,0) to node[right]{0} (1,1);
\end{tikzpicture}};
        \node (2) at (-3,3) {\begin{tikzpicture}[scale=.65]
\draw(0,0) to node[below]{0} (1,0) to  node[below]{0} (2,0) to  node[right]{0} (2,1) to  node[above]{0} (1,1) to node[above]{0}(0,1) to node[left]{1}(0,0)

(2,0) to  node[below]{1} (3,0) to  node[right]{0} (3,1) to  node[above]{1} (2,1)
(1,0) to node[right]{1} (1,1);
\end{tikzpicture}};
        \node (3) at (3,3) {\begin{tikzpicture}[scale=.65]
\draw(0,0) to node[below]{1} (1,0) to  node[below]{0} (2,0) to  node[right]{1} (2,1) to  node[above]{0} (1,1) to node[above]{1}(0,1) to node[left]{0}(0,0)

(2,0) to  node[below]{0} (3,0) to  node[right]{1} (3,1) to  node[above]{0} (2,1)
(1,0) to node[right]{0} (1,1);
\end{tikzpicture}};
        \node (4) at (0,6) {\begin{tikzpicture}[scale=.65]
\draw(0,0) to node[below]{0} (1,0) to  node[below]{0} (2,0) to  node[right]{1} (2,1) to  node[above]{0} (1,1) to node[above]{0}(0,1) to node[left]{1}(0,0)

(2,0) to  node[below]{0} (3,0) to  node[right]{1} (3,1) to  node[above]{0} (2,1)
(1,0) to node[right]{1} (1,1);
\end{tikzpicture}};
        \node (5) at (0,9) {\begin{tikzpicture}[scale=.65]
\draw(0,0) to node[below]{0} (1,0) to  node[below]{1} (2,0) to  node[right]{0} (2,1) to  node[above]{1} (1,1) to node[above]{0}(0,1) to node[left]{1}(0,0)

(2,0) to  node[below]{0} (3,0) to  node[right]{1} (3,1) to  node[above]{0} (2,1)
(1,0) to node[right]{0} (1,1);
\end{tikzpicture}};
\draw (1)--(2)
(1)--(3)
(2)--(4)
(3)--(4)--(5)
;
    \end{tikzpicture}$$
    
\end{minipage}\caption{The bijection $\Phi^{-1}$ maps the diamond in the left poset to the diamond in the right poset. The ``extra'' element $(0,1,0)$ maps to the maximal dimer cover on the right. Although $\Phi^{-1}$ is order preserving, $\Phi$ itself is not.}
    \label{fig:inverseorderpreserving}
\end{figure}

\end{example}

\begin{remark}
    Notice that even though $\Phi$ is not an isomorphism of posets, it is not far
    off in the following sense. For $\bfr\lessdotd\bfr'$, we have from \Cref{lem:moving_up} that if $\bfr$ and $\bfr'$ differ at an odd index,
    then $\Phi(\bfr)\lessdott\Phi(\bfr')$. If instead they differ at an
    even index, then $\Phi(\bfr)$ and $\Phi(\bfr')$ are separated by at most three cover relations in the twist order.
\end{remark}

\section{The case where $\mathbf{d} = (d^n)$}

\label{sec:uniformvector}

The classical setting of a dimer cover, i.e. a \emph{perfect matchings}, in our notation is when $\mathbf{d} = (1^n)$. Here, the cardinality of $\mathcal{D}_{\mathbf{d}}$ is a Fibonacci number. Perfect matchings of snake graphs in general, of which the $2 \times n$ grid is a special case, have been an extremely useful tool in the theory of cluster algebras (see for example \cite{musiker2010cluster}) and Markov numbers (such as in \cite{LLRS}). 

Mixed dimer covers in $\mathcal{D}_{\mathbf{d}}$ when $\bfd = (d^n)$ (i.e. $d$-dimer covers) for $d \geq 1$ have been prominent in the literature lately. Musiker, Ovenhouse, and Zhang showed that double dimer covers can be used to study a super analogue of type $A$ cluster algebras \cite{zbMATH07567672}. Soon after, these same authors along with Schiffler studied the combinatorics of general $d$-dimer covers  \cite{zbMATH08171225}. The latter reference in particular examines the lattice structure in this case. 

\subsection{Glass Plates}
When restricting to the case of a uniform dimension vector
$\bfd=(d^n)$, a connection with a surprising set of
combinatorial objects appears: Imagine a beam of light
shone through $n$ glass plates stacked atop each other.
Upon meeting the surface of each plate, some of the light shines through
and some of it is reflected. A path is a choice,
for each time light from the beam meets a surface,
to either bounce or shine through. We model
such a scenario with $n+1$ lines representing
the (sometimes shared) surfaces of the glass plates.
See \cite{hoggatt79} for an introduction to the study of such objects and \cite[A006356]{OEIS} for the enumeration in the case of three glass plates.

\begin{example}\label{ex:light_beams}

Here we illustrate the six paths a beam of light
can take through three glass plates with exactly
two bounces. Under each image is the tuple recording
the number of times each segment crosses a boundary shared
by two plates.

\[\begin{array}{ccc}
     \begin{tikzpicture}
        \draw[black] (0,0) -- (2.5,0);
        \draw[black] (0,.5) -- (2.5,.5);
        \draw[black] (0,1) -- (2.5,1);
        \draw[black] (0,1.5) -- (2.5,1.5);
        \node at (-.2,1.25) {1};
        \node at (-.2,.75) {2};
        \node at (-.2,.25) {3};
        \draw[orange] (0,1.75) -- (.375,1) -- (.625,1.5) -- (1.5,-0.25);
    \end{tikzpicture}  & \begin{tikzpicture}
        \draw[black] (0,0) -- (2.5,0);
        \draw[black] (0,.5) -- (2.5,.5);
        \draw[black] (0,1) -- (2.5,1);
        \draw[black] (0,1.5) -- (2.5,1.5);
        \node at (-.2,1.25) {1};
        \node at (-.2,.75) {2};
        \node at (-.2,.25) {3};
        \draw[orange] (0,1.75) -- (.625,.5) -- (.875,1) -- (1.5,-0.25);
    \end{tikzpicture}  & \begin{tikzpicture}
        \draw[black] (0,0) -- (2.5,0);
        \draw[black] (0,.5) -- (2.5,.5);
        \draw[black] (0,1) -- (2.5,1);
        \draw[black] (0,1.5) -- (2.5,1.5);
        \node at (-.2,1.25) {1};
        \node at (-.2,.75) {2};
        \node at (-.2,.25) {3};
        \draw[orange] (0,1.75) -- (.625,.5) -- (1.125,1.5) -- (2,-0.25);
    \end{tikzpicture} \\
    (0,0,2) & (1,0,1) & (1,1,2) \\
    \begin{tikzpicture}
        \draw[black] (0,0) -- (2.5,0);
        \draw[black] (0,.5) -- (2.5,.5);
        \draw[black] (0,1) -- (2.5,1);
        \draw[black] (0,1.5) -- (2.5,1.5);
        \node at (-.2,1.25) {1};
        \node at (-.2,.75) {2};
        \node at (-.2,.25) {3};
        \draw[orange] (0,1.75) -- (.875,0) -- (1.125,.5) -- (1.5,-0.25);
    \end{tikzpicture} & \begin{tikzpicture}
        \draw[black] (0,0) -- (2.5,0);
        \draw[black] (0,.5) -- (2.5,.5);
        \draw[black] (0,1) -- (2.5,1);
        \draw[black] (0,1.5) -- (2.5,1.5);
        \node at (-.2,1.25) {1};
        \node at (-.2,.75) {2};
        \node at (-.2,.25) {3};
        \draw[orange] (0,1.75) -- (.875,0) -- (1.375,1) -- (2,-0.25);
    \end{tikzpicture} & \begin{tikzpicture}
        \draw[black] (0,0) -- (2.5,0);
        \draw[black] (0,.5) -- (2.5,.5);
        \draw[black] (0,1) -- (2.5,1);
        \draw[black] (0,1.5) -- (2.5,1.5);
        \node at (-.2,1.25) {1};
        \node at (-.2,.75) {2};
        \node at (-.2,.25) {3};
        \draw[orange] (0,1.75) -- (.875,0) -- (1.625,1.5) -- (2.5,-0.25);
    \end{tikzpicture}\\
    (2,0,0) & (2,1,1) & (2,2,2)
\end{array}\]

\end{example}

\begin{proposition}\label{prop:glass_plates}
    The set $\mathcal{P}_\bfd$ for $\bfd=(d^n)$ is
    equinumerous with the set of paths taken by
    a beam of light shone through $d+1$ parallel glass
    plates in which the beam bounces $n-1$ times.
\end{proposition}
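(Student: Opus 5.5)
We prove the statement by passing to rank tuples and then giving an explicit bijection with light paths. By the discussion preceding Definition~\ref{def:vporder}, $\PP_\bfd$ is in bijection with $\RR_\bfd$. For $\bfd=(d^n)$ the conditions in Equation~(\ref{eq:rankconditions}) reduce to
\[
0\le r_i\le d \quad (1\le i\le n-1), \qquad r_i+r_{i+1}\le d \quad (1\le i\le n-2).
\]
So it suffices to biject tuples $(r_1,\ldots,r_{n-1})$ satisfying these inequalities with light paths through $d+1$ plates having exactly $n-1$ bounces.

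\textbf{The path model.} Label the $d+2$ surfaces from top to bottom by $0,1,\ldots,d+1$. The beam enters from above surface $0$ moving downward. Between consecutive bounces it travels monotonically, and the bounces alternate in direction: odd-numbered bounces occur while the beam moves down, and even-numbered bounces while it moves up. A path with $n-1$ bounces is therefore determined by the sequence $(x_1,\ldots,x_{n-1})$ of surfaces at which the bounces occur.

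\textbf{Constraints on the bounce surfaces.} The admissible sequences are exactly those with
\[
1\le x_{2j-1}\le d+1, \qquad 0\le x_{2j}\le d, \qquad x_{2j}<x_{2j-1}, \qquad x_{2j}<x_{2j+1}\ \text{(when $2j+1\le n-1$)}.
\]
The reasons are as follows.
\begin{itemize}
\item A downward bounce cannot occur at the top surface from outside, so $x_{2j-1}\ge 1$.
\item An upward bounce may occur at surface $0$ from inside, and cannot occur at surface $d+1$, so $0\le x_{2j}\le d$.
\item Each bounce must lie strictly beyond the previous one in the direction of travel. This gives $x_{2j}<x_{2j-1}$ (going up after a downward bounce) and $x_{2j+1}>x_{2j}$ (going down after an upward bounce).
\end{itemize}
This is the step I expect to need the most care: the model must be pinned down from the pictures in Example~\ref{ex:light_beams}. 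As a sanity check, for $d=2$ and two bounces the count is $\sum_{x_1=1}^{3}x_1=6$, which matches the example.

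\textbf{The bijection.} Define
\[
r_i=\begin{cases} d+1-x_i & i \text{ odd},\\ x_i & i\text{ even}.\end{cases}
\]
Each range condition on $x_i$ becomes exactly $0\le r_i\le d$. For adjacent indices with $i$ even, the condition $x_i<x_{i\pm1}$ becomes $r_i<d+1-r_{i\pm1}$, that is, $r_i+r_{i\pm1}\le d$. Every pair of adjacent indices contains exactly one even index, so the adjacent-sum conditions for all $1\le i\le n-2$ are covered. The map is clearly invertible: $x_i=d+1-r_i$ for $i$ odd and $x_i=r_i$ for $i$ even.

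Hence this map is a bijection between light paths with $n-1$ bounces and $\RR_{(d^n)}$. Composing with $\RR_\bfd\cong\PP_\bfd$ proves the proposition.
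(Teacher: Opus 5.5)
Your proof is correct and is essentially the paper's argument. The paper encodes a path by the number $c_i$ of interior surfaces crossed by each segment and sets $r_1=d-c_1$, $r_i=d-c_i-r_{i-1}$. In your coordinates $c_1=x_1-1$, $c_{2j}=x_{2j-1}-x_{2j}-1$ and $c_{2j+1}=x_{2j+1}-x_{2j}-1$, so that recursion gives exactly your map ($r_i=d+1-x_i$ for $i$ odd, $r_i=x_i$ for $i$ even), and your bounce-position coordinates simply make each constraint of $\RR_\bfd$ visibly match one range or monotonicity condition on the $x_i$, rather than going through the paper's alternating-sum condition $0\le\sum_{i\le k}(-1)^{i-1}c_i\le d$.
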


\begin{proof}
    A path with $n-1$ bounces is broken up by these bounces
    into $n$ line segments. Let $c_i$ for $1\leq i\leq n$
    be the number of shared surfaces crossed by the $i$th
    segment (in the illustrations in \Cref{ex:light_beams},
    this is the number of times the line segment crosses a
    horizontal line other than the very top or very bottom
    line). Note that a path is completely determined by
    the sequence $(c_1,c_2,\ldots,c_{n-1})\in[d]^{n-1}$.
    From this tuple, we define $\bfr=(r_1,r_2,\ldots,r_{n-1})$
    by
    \begin{align*}
        r_1&=d-c_1\text{, and}\\
        r_i&=d-c_i-r_{i-1}\text{ for }1<i\leq n-1.
    \end{align*}
    Note that for any sequence $(c_1,c_2,\ldots,c_{n-1})\in[d]^{n-1}$,
    we have that $r_i+r_{i+1}=d-c_{i+1}\leq d$ for all $1\leq i\leq n-2$. Thus, it remains to show that the sequences that determine
    a path of light are precisely those that satisfy the final
    condition to be a rank tuple: $1\leq r_k\leq d$ for
    all $1\leq k\leq n-1$.
    
    Note that we can unpack the above recursive definition for the $r_k$ to obtain
    \[r_k=\begin{cases}
        \sum_{i=1}^k (-1)^{i-1}c_i & \text{if $k$ is even, and}\\
        d-\sum_{i=1}^k (-1)^{i-1}c_i & \text{if $k$ is odd.}
    \end{cases}\]
    The condition that $0\leq r_k\leq d$ for all $k$ then becomes
    \[0\leq\sum_{i=1}^k (-1)^{i+1}c_i\leq d\] for all $1\leq k\leq n-1$, which
    is precisely the condition that the beam of light does
    not leave the glass plates until it has completed $n-1$
    bounces. The result follows.
\end{proof}

\subsection{Recurrence}

Let $a_d(n) := \vert  
\mathcal{D}_{(d^n)}
\vert$. As a convention, we set $a_d(0) = 1$. For small values of $d$, there are recorded recurrence relations for $a_d(n)$. If $d = 1$, we recover the Fibonnaci recurrence \[
a_1(n) = a_1(n-1) + a_1(n-2),
\]
and when $d = 2$, Musiker, Ovenhouse, Schiffler, and Zhang \cite{zbMATH08171225} used their enumeration formula to show \[
a_2(n) = 2a_2(n-1) + a_2(n-2) - a_2(n-3).
\]
Recurrences for $d=3,4,5$ are given in OEIS  A006357, A006358, and A006359, respectively. 
In the remainder of this section, we present a way to leverage Theorem~\ref{thm:ClaussenOvenhouse} to give the recurrence in general.

First, note that in the case that $\bfd=(d^n)$, Theorem~\ref{thm:ClaussenOvenhouse} tells us that
$a_d(n)$ is the $(1,1)$-entry of the matrix
\[\left(R_{d,d}\right)^n.\]
If $A_d(x)=\sum_{n\geq0}a_d(n)x^n$ is the ordinary generating function for the sequence
$(a_d(n))_{n\geq0}$, then $A_d(x)$ is the $(1,1)$-entry of the matrix
\[\sum_{n\geq0}\left(R_{d,d}\right)^nx^n.\]
By standard matrix identities, this matrix is equal to the inverse of a particular matrix:
\[\sum_{n\geq0}\left(R_{d,d}\right)^nx^n=(I_{d+1}-xR_{d,d})^{-1}\] where $I_{d+1}$ is the
$(d+1)\times(d+1)$ identity matrix.
Let $M_d(x)=I_{d+1}-xR_{d,d}$.
Because we need only the $(1,1)$-entry of its inverse, we apply Cramer's rule to obtain this entry as
the quotient
\[A_d(x)=\frac{\det\left(M_d(x)_{1,1}\right)}{\det\left(M_d(x)\right)}\]
where for a matrix $M$, we write $M_{i,j}$ for the matrix obtained by deleting
the $i$th row and $j$th column of $M$.

\begin{example}
    For $d=4$, we have
    \begin{align*}
        M_4(x)&=\left[\begin{array}{rrrrr}
-x + 1 & -x & -x & -x & -x \\
-x & -x + 1 & -x & -x & 0 \\
-x & -x & -x + 1 & 0 & 0 \\
-x & -x & 0 & 1 & 0 \\
-x & 0 & 0 & 0 & 1
\end{array}\right]\text{, and}\\
M_4(x)_{1,1}&=\left[\begin{array}{rrrr}
-x + 1 & -x & -x & 0 \\
-x & -x + 1 & 0 & 0 \\
-x & 0 & 1 & 0 \\
 0 & 0 & 0 & 1
\end{array}\right].
    \end{align*}
    Note that computing the determinant via cofactor expansion
    along the last row of $M_4(x)_{1,1}$ yields the determinant
    of the smaller matrix $M_2(x)$. So in this case,
    \[A_4(x)=\frac{\det(M_2(x))}{\det(M_4(x))}=\frac{1-2x-x^2+x^3}{1-3x-3x^2+4x^3+x^4-x^5}.\]
\end{example}

\begin{proposition}\label{prop:CramersRuleAd}
    If $M_d(x)=I_{d+1}-xR_{d,d}$, then
    \[A_d(x)=\frac{\det(M_{d-2}(x))}{\det(M_d(x))}.\]

        \label{prop:det}
\end{proposition}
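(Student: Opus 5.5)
Since the Cramer's rule computation preceding the statement already gives $A_d(x)=\det(M_d(x)_{1,1})/\det(M_d(x))$, it suffices to show that $\det(M_d(x)_{1,1})=\det(M_{d-2}(x))$. The plan is to understand the shape of $M_d(x)_{1,1}$ directly and then remove its trivial last row and column, exactly as in the $d=4$ example.

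First I describe the entries of $M_d(x)$. By definition $R_{d,d}$ has entry $(i,j)$ equal to $1$ exactly when $i+j\le d+2$, for $1\le i,j\le d+1$. Hence $M_d(x)=I_{d+1}-xR_{d,d}$ has $(i,j)$-entry
\[
\delta_{ij}-x\,[\,i+j\le d+2\,].
\]
Deleting the first row and column leaves the indices $2\le i,j\le d+1$. Reindexing with $i'=i-1$ and $j'=j-1$, so that $1\le i',j'\le d$, the entry of $M_d(x)_{1,1}$ in position $(i',j')$ is
\[
\delta_{i'j'}-x\,[\,i'+j'\le d\,].
\]

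Next I examine the last row, $i'=d$. Since $j'\ge 1$, we have $d+j'>d$, so the indicator vanishes for every $j'$. The last row of $M_d(x)_{1,1}$ is therefore the standard unit vector with a $1$ in position $(d,d)$. Expanding the determinant along this row reduces it to the determinant of the leading $(d-1)\times(d-1)$ block, with indices $1\le i',j'\le d-1$ and entries $\delta_{i'j'}-x\,[\,i'+j'\le d\,]$.

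Finally I compare this block with $M_{d-2}(x)$. That matrix has size $d-1$, and its entries are $\delta_{ij}-x\,[\,i+j\le (d-2)+2\,]=\delta_{ij}-x\,[\,i+j\le d\,]$. These agree with the block entry by entry, which proves the identity.

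Two small points need care. The first is the edge case $d=1$, where $M_{-1}$ should be read as the empty matrix with determinant $1$; directly, $M_1(x)_{1,1}=(1)$, which is consistent. The case $d=0$ is excluded from the statement. The second is that the Cramer's rule quotient requires $\det M_d(x)\ne 0$ as a formal power series. This holds because its constant term is $\det I_{d+1}=1$. I expect no real obstacle here; the only step needing care is the index shift after deleting the first row and column.
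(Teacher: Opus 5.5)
Your proposal is correct and follows the paper's proof: use Cramer's rule to reduce to $\det(M_d(x)_{1,1})$, observe that the last row of $M_d(x)_{1,1}$ is a standard unit vector, expand along that row, and identify the remaining leading block with $M_{d-2}(x)$. Your entry formula $\delta_{i'j'}-x\,[\,i'+j'\le d\,]$ makes precise the step the paper attributes to ``the symmetry in the definition of $M_d(x)$.'' Your remarks on the case $d=1$ and on $\det M_d(x)$ being invertible as a formal power series are sound additions.
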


\begin{proof}
Notice that the last row of $M_d(x)$ is only nonzero in the first and last column, which have entries $-x$ and 1 respectively. Therefore applying cofactor expansion along the last row of $M_d(x)_{1,1}$ shows that the determinant of $M_d(x)_{1,1}$ is the minor of $M_{d}(x)$ in rows and columns $2,3,\ldots,d$. By the symmetry in the definition of $M_d(x)$, we see that this minor is exactly the determinant of $M_{d-2}(x)$, as desired.
\end{proof}

\begin{theorem}\label{thm:gen_fun_formula}
    For $M_d(x)=I_{d+1}-xR_{d,d}$, we have that
    \[\det(M_d(x))=\det(M_{d-2}(x))-x\det(M_{d-1}(-x)).\]
    Furthermore,
    \[A_d(x)=\frac{1 - \sum_{i=1}^{d-1} (-1)^{\lfloor \frac{i-1}{2}\rfloor} {\lfloor \frac{d+i-1}{2}\rfloor \choose i}x^i}{1 - \sum_{i=1}^{d+1} (-1)^{\lfloor \frac{i-1}{2}\rfloor} {\lfloor \frac{d+i+1}{2}\rfloor \choose i}x^i}.\]
\end{theorem}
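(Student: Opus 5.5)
We begin with the determinant recurrence, working directly with the entries of $M_d(x)=I_{d+1}-xR_{d,d}$. Its $(i,j)$-entry is $\delta_{i,j}-x\,[i+j\le d+2]$. The last row is $(-x,0,\ldots,0,1)$ and, by symmetry, so is the last column. Expanding $\det M_d(x)$ along the last row gives two terms. The $(d+1,d+1)$ term is the determinant $N_d$ of the leading $d\times d$ block. The $(d+1,1)$ term is $\pm x$ times the minor obtained by deleting row $d+1$ and column $1$.

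For $N_d$, we would do a second expansion inside it, this time along its last row, which is $(-x,-x,0,\ldots,0,1)$. We then pair this with the analogous expansion of the other term. The aim is to show that all contributions involving the first row and first column of $M_d(x)$ combine into a single term. The remaining contribution, from rows and columns $2,\ldots,d$, is exactly the block used in Proposition~\ref{prop:CramersRuleAd}, so it equals $\det M_{d-2}(x)$.

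The main obstacle is showing that the leftover term equals $-x\det M_{d-1}(-x)$. The sign change $x\mapsto -x$ indicates that a row or column reversal is involved. First we would subtract consecutive rows: row $i$ of $R_{d,d}$ minus row $i+1$ is the standard basis vector $\mathbf{e}_{d+2-i}$. This makes $M_d(x)$ row-equivalent to a matrix whose rows $1,\ldots,d$ are $\mathbf{e}_i-\mathbf{e}_{i+1}-x\,\mathbf{e}_{d+2-i}$, with last row $(-x,0,\ldots,0,1)$. In this sparse form we would isolate the entry $-x$ in the corner and reverse the order of the remaining columns. We then expect the resulting $d\times d$ matrix to have entries $\delta_{i,j}+x[i+j\le d+1]$, which is exactly $M_{d-1}(-x)$, with the permutation signs cancelling. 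We would verify the signs on $d=2,3,4$, using the $d=4$ example as a check. If this reorganization fails, the fallback is to prove the recurrence through the polynomial closed form directly, by induction.

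For the closed form, set $p_d(x)=1-\sum_{i=1}^{d+1}(-1)^{\lfloor (i-1)/2\rfloor}\binom{\lfloor (d+i+1)/2\rfloor}{i}x^i$. Proposition~\ref{prop:CramersRuleAd} gives $A_d(x)=\det M_{d-2}(x)/\det M_d(x)$, and the stated numerator is $p_{d-2}(x)$. It therefore suffices to prove $\det M_d(x)=p_d(x)$ for all $d$. We would do this by strong induction on $d$, with base cases $d=0$ ($1-x$) and $d=1$ ($\det\begin{pmatrix}1-x&-x\\-x&1\end{pmatrix}=1-x-x^2$), both computed directly. For the inductive step, compare coefficients of $x^i$ in $p_{d-2}(x)-x\,p_{d-1}(-x)$. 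The term $-x\,p_{d-1}(-x)$ contributes $(-1)^{i}(-1)^{\lfloor (i-2)/2\rfloor}\binom{\lfloor (d+i-1)/2\rfloor}{i-1}$ to the coefficient of $x^i$. The sign bookkeeping $(-1)^{i+\lfloor (i-2)/2\rfloor}$ versus $(-1)^{\lfloor (i-1)/2\rfloor}$ must be matched by parity of $i$. After that, Pascal's rule $\binom{N-1}{i}+\binom{N-1}{i-1}=\binom{N}{i}$ with $N=\lfloor (d+i+1)/2\rfloor$ should close the induction, since $\lfloor (d+i-1)/2\rfloor=N-1$. Checking this parity split is routine but fiddly. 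The genuinely delicate step is the determinant identity above.
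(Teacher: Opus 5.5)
Your consecutive-row subtraction is the right idea, and it is exactly the paper's device. It is left multiplication by $U_{d+1}^{-1}$, where $U_{d+1}$ is the upper-triangular all-ones matrix, and it produces $N_d(x)=U_{d+1}^{-1}-xJ_{d+1}$. Here $J_{d+1}$ is the antidiagonal permutation matrix and $R_{d,d}=U_{d+1}J_{d+1}$. Your closed-form induction (base cases, the parity identity $\lfloor\frac{i-1}{2}\rfloor\equiv i+\lfloor\frac{i-2}{2}\rfloor \pmod 2$, then Pascal's rule) is also the paper's argument and is fine.

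The gap is in the determinant recurrence, at exactly the step you expect to go through. Deleting row $d+1$ and column $1$ of $N_d(x)$ leaves the $d\times d$ matrix $K$ whose $i$th row is $\mathbf{e}_{i-1}-\mathbf{e}_i-x\,\mathbf{e}_{d+1-i}$ (with $\mathbf{e}_0=0$). This matrix is sparse, with at most three nonzero entries per row and constant entries $\pm1$. So no reordering of its columns is the dense matrix $M_{d-1}(-x)$ with entries $\delta_{i,j}+x[i+j\le d+1]$. Already for $d=2$ you get $K=\begin{pmatrix}-1&-x\\ 1-x&-1\end{pmatrix}$, and reversing its columns gives a matrix of determinant $-(1+x-x^2)=-\det M_1(-x)$. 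Neither the matrix nor the sign comes out as you predict. What is true is $K=-J_dN_{d-1}(-x)J_d$. To use it:
\begin{itemize}
\item reverse the rows as well as the columns, so the two signs $(-1)^{d(d-1)/2}$ cancel each other;
\item negate, so the resulting $(-1)^d$ cancels the cofactor sign $(-1)^{d+2}$;
\item apply your row reduction a second time, to $M_{d-1}(-x)$, to get $\det N_{d-1}(-x)=\det M_{d-1}(-x)$, since $\det U_d=1$.
\end{itemize}

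The $\det M_{d-2}(x)$ half must be computed in the same sparse form. Expanding the dense $M_d(x)$ along its last row does not split as $\det M_{d-2}(x)$ plus a corner term. For $d=2$ it gives $(1-2x)+(-x^2+x^3)$, whereas the recurrence splits $\det M_2(x)$ as $(1-x)+(-x-x^2+x^3)$. Your assertion that the contributions involving the first row and column ``combine into a single term'' is just a restatement of the recurrence. In $N_d(x)$, by contrast, the first column is $\mathbf{e}_1-x\,\mathbf{e}_{d+1}$. Its $(1,1)$-minor has the same determinant as the block of Proposition~\ref{prop:CramersRuleAd}, because on rows $2,\ldots,d+1$ the row reduction is unitriangular; that determinant is $\det M_{d-2}(x)$. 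Together with the corrected corner term this yields the recurrence, and the argument becomes the paper's.

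Finally, your fallback is circular. Checking that $p_d(x)$ satisfies the recurrence says nothing about $\det M_d(x)$ unless you already know that $\det M_d(x)$ satisfies it.
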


\begin{proof}
    Let $U_{d+1}$ be the $(d+1)\times(d+1)$ upper-triangular matrix consisting
    of ones on and above the diagonal
    and zeros below the diagonal. Let $J_{d+1}$ be the $(d+1)\times(d+1)$ matrix with ones on the
    antidiagonal and zeros elsewhere. Then we have that $R_{d,d}=U_{d+1}J_{d+1}$,
    hence
    \begin{align*}
        M_d(x)&=I_{d+1}-xU_{d+1}J_{d+1}\\
        &=U_{d+1}\left({U_{d+1}}^{-1}-xJ_{d+1}\right).
    \end{align*}
    Because $\det(U_{d+1})=1$, we then have that the determinant of $M_d(x)$
    is the same as the determinant of
    \[N_d(x):={U_{d+1}}^{-1}-xJ_{d+1}.\] Note that ${U_{d+1}}^{-1}$ consists of ones on the diagonal,
    negative ones on the superdiagonal, and zeros elsewhere. Hence,
    the first column of $N_d(x)$ consists of a one in the first row and
    a $-x$ in the last row.

For the computation of $\det(M_d(x)) = \det(N_d(x))$, the reader may wish to follow along using Example \ref{ex:determinantofN_5}.
    We proceed by cofactor expansion down the first column: 
    \begin{equation}
    \label{eq:determinant}
         \det(N_d(x)) = 1\cdot\det(N_d(x)_{1,1}) + (-1)^{d+1+1}\cdot (-x)\cdot \det(N_d(x)_{d+1,1}).
    \end{equation}
    Note that $N_d(x)_{1,1}$ is a $d\times d$ matrix whose last row consists of all zeros except for a 1 in the final position. Consequently, expanding along this row, we have 
    $$\det(N_d(x)_{1,1}) = (-1)^{d+d}\cdot 1 \cdot \det((N_d(x)_{1,1})_{d,d})$$
    and, similarly as in the proof of Proposition \ref{prop:det}, note that $(N_d(x)_{1,1})_{d,d} = N_{d-2}(x)$; therefore, we have $\det(N_d(x)_{1,1}) = \det(N_{d-2}(x))$.

    Now, for $N_d(x)_{d+1,1}$, note that this matrix is the result of transforming $N_{d-1}(x)$ by reversing the order of all rows and columns (corresponding to left and right multiplication by $J_{d}$), negating all entries, and then replacing $x$ with $-x$.
That is, \[N_d(x)_{d+1,1}=-J_{d}N_{d-1}(-x)J_{d}.\] Thus, \begin{align*}
    \det(N_d(x)_{d+1,1}) &=\det(-J_{d}N_{d-1}(-x)J_{d}) \\
    &=(-1)^{d}\det(J_{d})\det(N_{d-1}(-x))\det(J_{d})\\
    &=(-1)^{d}(-1)\det(N_{d-1}(-x))(-1) \\
    &=(-1)^{d+2}\det(N_{d-1}(-x)).
\end{align*}
Collecting these determinants and substituting back into Equation (\ref{eq:determinant}), we have
\begin{align*}
    \det(N_d(x)) &= \det(N_{d-2}(x)) + (-1)^{d+2}\cdot (-x)\cdot (-1)^{d+2}\det(N_{d-1}(-x))\\
    &=\det(N_{d-2}(x))  -x\det(N_{d-1}(-x)).
\end{align*}
Since $\det(M_d(x)) = \det(N_d(x))$, we obtain the recurrence
$$
\det(M_d(x)) = \det(M_{d-2}(x))  -x\det(M_{d-1}(-x))
$$
as desired.

    Next, we seek a closed formula for $A_d(x)$. Let 
    \[p_d(x) = 1 - \sum_{i=1}^{d+1} (-1)^{\lfloor \frac{i-1}{2}\rfloor} {\lfloor \frac{d+i+1}{2}\rfloor \choose i}x^i.\]
    By Proposition \ref{prop:CramersRuleAd}, it suffices to show $\det(M_d(x)) = p_d(x)$. Since we have provided a two-term recurrence for $\det(M_d(x))$, we now wish to  show $p_d(x)$ shares the same recurrence and initial conditions. The latter are easy to check: $p_{-1}(x) = 1$ and $p_0(x) = 1-x$.

    Now, we consider the right-hand side of the two-term recurrence but plug in $p_d(x)$: the expression
    $p_{d-2}(x)-xp_{d-1}(-x)$ expands as \begin{align*}
    &\phantom{=}\left(1 - \sum_{i=1}^{d-1} (-1)^{\lfloor \frac{i-1}{2}\rfloor} {\lfloor \frac{d+i-1}{2}\rfloor \choose i}x^i\right) - x \left(1-\sum_{i=1}^{d} (-1)^{\lfloor \frac{i-1}{2}\rfloor} {\lfloor \frac{d+i}{2}\rfloor \choose i}(-x)^i\right)\\
     &= 1-x-\left(\sum_{i=1}^{d-1} (-1)^{\lfloor \frac{i-1}{2}\rfloor} {\lfloor \frac{d+i-1}{2}\rfloor \choose i}x^i\right) - \left(\sum_{i=1}^{d} (-1)^{\lfloor \frac{i-1}{2}\rfloor} {\lfloor \frac{d+i}{2}\rfloor \choose i}(-x)^{i+1}\right).\\
     \intertext{Peeling off the $i=1$ term of the first sum and re-indexing
     the second sum then yields}
     &= 1-\left(\left\lfloor \frac{d}{2}\right\rfloor+1\right)x-\left(\sum_{i=2}^{d-1} (-1)^{\lfloor \frac{i-1}{2}\rfloor} {\lfloor \frac{d+i-1}{2}\rfloor \choose i}x^i\right)
     - \left(\sum_{i=2}^{d+1} (-1)^{\lfloor \frac{i-2}{2}\rfloor} {\lfloor \frac{d+i-1}{2}\rfloor \choose i-1}(-x)^i\right).\\
     \intertext{Now, peeling off the $i=d$ and $i=d+1$ terms of the second sum which are $(-1)^{\lfloor \frac{d-2}{2}\rfloor}(-x)^d$ and $(-1)^{\lfloor \frac{d-1}{2}\rfloor}(-x)^{d+1}$ respectively,
     and combining the two sums yields}
     &= 1-\left(\left\lfloor \frac{d}{2}\right\rfloor+1\right)x
     -\sum_{i=2}^{d-1} \left((-1)^{\lfloor \frac{i-1}{2}\rfloor} {\lfloor \frac{d+i-1}{2}\rfloor \choose i} + (-1)^{\lfloor \frac{i-2}{2}\rfloor+i} {\lfloor \frac{d+i-1}{2}\rfloor \choose i-1}\right) x^i+(-1)^{\lfloor \frac{d-2}{2}\rfloor+d}x^d \\
     &\qquad + (-1)^{\lfloor \frac{d-1}{2}\rfloor+d+1}x^{d+1}.\\
     \intertext{Using the identity $\left\lfloor \frac{i-1}{2}\right\rfloor \equiv i + \left\lfloor \frac{i-2}{2}\right\rfloor \pmod{2}$, we obtain}
     &= 1-\left(\left\lfloor \frac{d}{2}\right\rfloor+1\right)x
     -\sum_{i=2}^{d-1} (-1)^{\lfloor \frac{i-1}{2}\rfloor}\left( {\lfloor \frac{d+i-1}{2}\rfloor \choose i} + {\lfloor \frac{d+i-1}{2}\rfloor \choose i-1}\right) x^i+(-1)^{\lfloor \frac{d-1}{2}\rfloor}x^d + (-1)^{\lfloor \frac{d}{2} \rfloor} x^{d+1}.
     \intertext{Finally, applying Pascal's identity yields}
          &= 1-\left(\left\lfloor \frac{d}{2}\right\rfloor+1\right)x
     -\sum_{i=2}^{d-1} (-1)^{\lfloor \frac{i-1}{2}\rfloor}{\lfloor \frac{d+i+1}{2}\rfloor \choose i} x^i+(-1)^{\lfloor \frac{d-1}{2}\rfloor}x^d + (-1)^{\lfloor \frac{d}{2} \rfloor} x^{d+1}.
    \end{align*}
We recognize this quantity now as $p_d(x)$, as desired.
\end{proof}

\begin{remark}
One may notice a discrepancy between $A_2(x)$ as given in Theorem \ref{thm:gen_fun_formula} and as given in \cite[Example 6.2]{zbMATH08171225}. This is a result of a difference in indexing: here we index by the length of $\mathbf{d}$ whereas the cited article indexes by the number of tiles. This results in our sequences being offset by one, hence a different numerator in the generating function.
\end{remark}

\begin{example}
\label{ex:determinantofN_5}
Here we carry out a computation of the determinant of the matrix
$N_5(x)$ as defined in the proof of Theorem~\ref{thm:gen_fun_formula}.
We proceed by cofactor expansion along the first column. Our first
term is then given by the value $1$ multiplied by the determinant
of the matrix obtained by deleting the first row and column (highlighted
below in red with a dashed outline). The determinant of this submatrix
can then be computed by cofactor expansion along the bottom row, yielding
the determinant of the submatrix obtained by deleting its last row and column (highlighted below in orange with a solid outline). Note that this submatrix
is precisely $N_3(x)$.
    \[\begin{tikzpicture}
    \draw[fill=red!40!white, draw=red, very thick, dashed] (-1.75,.85) rectangle ++(4.35,-2.1);
    \draw[fill=red!50!yellow!40!white, draw=red!50!yellow!90!black, ultra thick] (-1.7,.8) rectangle ++(3.6,-1.6);
        \node {$\begin{bmatrix}
            1 & -1 & 0 & 0 & 0 & -x\\
            0 & 1 & -1 & 0 & -x & 0\\
            0 & 0 & 1 & -x-1 & 0 & 0\\
            0 & 0 & -x & 1 & -1 & 0\\
            0 & -x & 0 & 0 & 1 & -1\\
            -x & 0 & 0 & 0 & 0 & 1
        \end{bmatrix}$};
    \end{tikzpicture}\]

Continuing our cofactor expansion along the first column of $N_5(x)$, our
only other term is $(-1)^6x$ multiplied by the determinant of the matrix obtained by deleting the first column and last row (highlighted below in cyan with a solid outline).
    \[\begin{tikzpicture}
    \draw[fill=cyan!40!white, draw=cyan!90!black, very thick] (-1.8,1.23) rectangle ++(4.45,-2.1);
        \node {$\begin{bmatrix}
            1 & -1 & 0 & 0 & 0 & -x\\
            0 & 1 & -1 & 0 & -x & 0\\
            0 & 0 & 1 & -x-1 & 0 & 0\\
            0 & 0 & -x & 1 & -1 & 0\\
            0 & -x & 0 & 0 & 1 & -1\\
            -x & 0 & 0 & 0 & 0 & 1
        \end{bmatrix}$};
    \end{tikzpicture}\]

Note that this matrix looks very similar to $N_4(x)$, but it has been transformed in the following ways:
\begin{enumerate}
    \item the order of its rows has been reversed (left multiplication by $J_5$),
    \item the order of its columns has been reversed (right multiplication by $J_5$),
    \item it has been negated, and
    \item $x$ has been replaced with $-x$.
\end{enumerate}
That is, this submatrix is given by \[-J_5N_4(-x)J_5.\] Adding these two terms yields
\begin{align*}
    \det(N_5(x))&=\det(N_3(x))+(-1)^{6}x\det(-J_5N_4(-x)J_5)\\
    &=\det(N_3(x))-x\det(J_5)\det(N_4(-x))\det(J_5)\\
    &=\det(N_3(x))-x\det(N_4(-x)).
\end{align*}
\end{example}

\section{Future work}
\label{sec:futurework}

We began this project considering the degeneration order for representations of general
type $A$ quivers, and we restricted to the case of complexes both because it is a more
tractable case and because of the connections we uncovered with mixed dimer covers.
Our primary interest, then, is in how this complex condition can be relaxed (e.g.
by requiring each \emph{triple} composition be zero rather than any composition) and whether one can naturally
extend the mixed dimer interpretation to these cases.

The module category of $C_n$ is completely described in \cite{Schemes}. That paper also considers the algebra $\widetilde{C_n}$, which is the quotient of the path algebra of a directed cycle modulo all length two paths. The algebras $C_n$ and $\widetilde{C_n}$ share many properties, and it would be interesting to find a dimer interpretation in the setting of $\widetilde{C_n}$. A natural suggestion is that these should relate to \emph{band graphs}, as in \cite{musiker2013bases}. Higher dimer covers of band graphs have recently been considered in \cite{kang2026higherqcontinuedfractionsdimers}.

Another natural question when one has a new indexing set for a set of geometric objects
is whether one can read geometric data from the combinatorics of those
objects. In this paper, we primarily concern ourselves with the degeneration order (how
these geometric objects fit together), but one could ask whether information like the
dimension of the orbit indexed by a mixed dimer cover has combinatorial meaning for that cover.

\section*{Acknowledgements}

E.B. was supported by the German Research Foundation SFB-TRR 358/1 2023 – 491392403.

This material is based on work done while the authors were participating in the Mathematics Research Communities (MRC) 2024
Summer Conference in Java Center, New York, supported by the National Science Foundation under Grant Number DMS 1916439. 
We thank the American Mathematical Society for their support. We also thank Pamela Harris for teaching us about Kostant's partition function.

The authors made use of generative AI in preparing this manuscript; specifically, the matrix factorization as a first step in the proof of Theorem \ref{thm:gen_fun_formula} was suggested by Gemini \cite{gemini}. The remaining arguments herein are due to the authors.

\bibliographystyle{abbrv}
\bibliography{bibliography}

\end{document}